\newtheorem{Theorem}{Theorem}
\newtheorem{Proposition}[Theorem]{Proposition}
\newtheorem{Corollary}[Theorem]{Corollary}
\newtheorem{Lemma}[Theorem]{Lemma}
\newtheorem{Example}[Theorem]{Example}
\title{Homotopes of finite-dimensional algebras.}
\author[1, 2]{I.Yu. Zhdanovskiy\thanks {ijdanov@mail.ru}}
\affil[1]{Moscow Institute of Physics and Technology, Laboratory AGHA}
\affil[2]{National Research University High School of Economics, Laboratory of Algebraic Geometry}
\begin{document}

\maketitle

\begin{abstract}
In this article we study homotopes of finite-dimensional algebras (not necessarily, associative). In the case of associative algebras we study homotopes by methods of Category theory and give description of so-called well-tempered elements of finite-dimensional associative algebra in algebraic terms.
\end{abstract}

\section{Introduction}

%Non-associative algebras are very wide class of algebras. This class is far from complete understanding even in finite-dimensional case.
%Of course,

Non-associative algebras is a branch of algebra which is far from complete understanding even in finite-dimensional case. There are many examples of applications of non-associative algebras in many branches of math and physics: Lie algebras, Jordan algebras etc.
Important concepts for studying of non-associative algebras are isotopy and homotopy of algebras. Isotopy and homotopy were introduced by Albert (see \cite{Alb}) and intensively studied by many investigators (see \cite{Bruck}, \cite{McC}, \cite{Jac} etc).

Isotopy permits us to deform of multiplication structure of algebras. Namely, consider two algebras $(A,m_A)$ and $(B,m_B)$ with multiplication laws $m_A$ and $m_B$ respectively. Homotopy (isotopy) of algebras $A$ and $B$  is a three linear (bijective linear) maps $f_1,f_2,f_3: A \to B$ such that $f_3(m_A(a_1,a_2)) = m_B(f_1(a_1),f_2(a_2))$. Of course, the concept of homotopy(isotopy) is a generalization of the concept of homomorphism (isomorphism) of algebras.

One of the interesting partial cases of homotopy is a concept $a$-homotope of algebra. Consider algebra $(A,m)$ and fix element $a \in A$. Define new multiplication laws $L_a(m)$ and $R_a(m)$ by formulas: $L_a(m)(x,y) = m(x,m(a,y))$ and $R_a(m)(x, y) = m(m(x,a),y)$ respectively. Algebras $(A,L_a(m))$ and $(A,R_a(m))$ are called left and right homotopes of $A$ with respect to element $a$. Also, there are many generalizations of the notion $a$-homotope in partial cases: Jordan algebras, alternative algebras etc.

In our work we will consider $a$-homotopes of associative algebras.
Of course, if algebra $(A,m)$ is associative then $L_a(m) = R_a(m)$ for any element $a \in A$.
Denote by $A_a$ the homotope of $A$ with respect to element $a$. Adding unit element to $A_a$ externally, we get {\it augmented homotope} $\widehat{A}_a$.

Augmented homotopes play important role in Quantum Information Theory. Firstly, recall that one of important notion of Quantum Information Theory is mutually unbiased bases. These bases were introduced by Schwinger (cf. \cite{Schw}) and used in construction of quantum protocol BB84 (see \cite{BB84}). Two orthonormal bases $\{e_i\}^n_{i=1}$ and $\{f_j\}^n_{j=1}$ in Hermitian vector space are mutually unbiased if $|(e_i, f_j)|^2 = \frac1n$ for any $i,j$. One of the hardest problem of Quantum Information Theory is a classification of MUBs. This problem is completely solved only if $n \le 5$.

Explain the role of augmented homotopes in the problem of classification of MUBs.
For this purpose, recall the notion of reduced Temperley-Lieb algebra (see \cite{TL}, \cite{BZ}).
Let $\Gamma$ be a simply-laced graph. Reduced Temperley-Lieb algebra $B_r(\Gamma)$ be an algebra over ${\mathbb C}[r,r^{-1}]$ with generators $x_v$ labeled by vertices of $\Gamma$. These generators subject to the following relations: $x^2_v = x_v$, $x_v x_w = x_w x_v = 0$ if $(v,w)$ is not an edge in $\Gamma$ and $x_v x_w x_v = r x_v, x_w x_v x_w = r x_w$ if $(v,w)$ is an edge in $\Gamma$.
Consider complete bipartite graph $K_{n,n}$. In this case classifications of $n$-dimensional representations of $B_r(K_{n,n})$ is a "complexification" of the problem of classification of MUB's in ${\mathbb C}^n$.
In \cite{BZ} algebra $B_r(\Gamma)$ is intensively studied and, in particular, it was shown that algebra $B_r(\Gamma)$ is a homotope of path algebra of graph $\Gamma$ with respect to laplacian $\Delta$ of $\Gamma$. Using general theory developed in \cite{BZ}, it was shown the existence of four-dimensional family of MUBs in dimension $6$ (cf. \cite{BZ6}) and description of one-dimensional family in algebraic terms in dimension $7$ (see \cite{KZ1}, \cite{KZ2}).

This work was motivated by the following concept of homotopes of associative algebras. Recall the notion {\it well-tempered} elements \cite{BZ}. Let $A$ be a unital associative algebra $A$. Fix element $x \in A$. Consider augmented homotope $\widehat{A}_x$. One can construct two homomorphisms of unital algebras: $\psi_i: \widehat{A}_x \to A, i = 1,2$ defined by rules: $\psi_1: a \mapsto ax$ and $\psi_2: a \mapsto xa$. We will say that $x$ is a {\it well-tempered} if $x$ subject to the following conditions:
\begin{enumerate}
\item{$A$ is a projective left and right $\widehat{A}_x$-module, where structure of left (resp. right) $\widehat{A}_x$ - module is obtained from $\psi_1$ (resp. $\psi_2$)}
\item{$AxA = A$.}
\end{enumerate}
In \cite{BZ} it was proven that if $x$ is well-tempered then abelian categories ${\rm k - Mod}$ and ${\rm A - Mod}$ are full subcategories of ${\rm \widehat{A}_x - Mod}$. Moreover, ${\rm k - Mod}$, ${\rm \widehat{A}_x - Mod}$ and ${\rm A - Mod}$ are in recollement situation in the sense of \cite{McV}, \cite{Psa}. This situation permits to study ${\rm \widehat{A}_x - mod}$ in terms of ${\rm A - mod}$. In particular, one can get the estimation of global homological dimension of ${\rm \widehat{A}_x - mod}$ in terms of global dimension of ${\rm A - mod}$.

Main result of this article is the following statement:
\begin{Theorem}
Consider finite-dimensional associative algebra $A$ and element $x \in A$ such that $A x A = A$ then $x$ is well-tempered element of $A$.
\end{Theorem}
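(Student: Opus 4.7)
The strategy is simple: use the assumption $AxA = A$ to write the unit as a finite sum $1_A = \sum_{i=1}^n a_i x b_i$ with $a_i, b_i \in A$, and convert this identity into an explicit splitting of $A$ out of a finitely generated free $\widehat{A}_x$-module. Since the second condition in the definition of well-tempered is precisely the hypothesis, only the projectivity of $A$ as a left and as a right $\widehat{A}_x$-module remains to be checked. I will treat the left case in detail; the right case is symmetric, obtained by swapping the roles of the $a_i$ and $b_i$ and replacing $\psi_1$ by $\psi_2$.

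Equip $A$ with the left $\widehat{A}_x$-action $u \cdot m := \psi_1(u)\,m$. Define the $\widehat{A}_x$-linear map
$$\pi : \widehat{A}_x^{\,n} \longrightarrow A, \qquad (u_1, \dots, u_n) \longmapsto \sum_{i=1}^n \psi_1(u_i)\, b_i,$$
which is left-linear because $\psi_1$ is an algebra homomorphism. Define a candidate section
$$s : A \longrightarrow \widehat{A}_x^{\,n}, \qquad m \longmapsto (m a_1, \dots, m a_n),$$
where each $m a_i$ is regarded as an element of the $A_x$-summand of $\widehat{A}_x$. The essential verification is that $s$ is left $\widehat{A}_x$-linear: for $a \in A$ embedded in the $A_x$-summand, the product $a \cdot_{\widehat{A}_x} (m a_i)$ is computed in the homotope as $axm a_i = \psi_1(a)\,(m a_i)$, so $s(\psi_1(a)\,m) = a \cdot s(m)$; the case of the external unit is automatic. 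Finally
$$(\pi \circ s)(m) \;=\; \sum_{i=1}^n (m a_i)\, x\, b_i \;=\; m \sum_{i=1}^n a_i x b_i \;=\; m,$$
so $s$ splits $\pi$ and $A$ is a direct summand of the free module $\widehat{A}_x^{\,n}$, hence projective as a left $\widehat{A}_x$-module.

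I do not expect a genuine obstacle: the whole argument is driven by the single identity $1_A = \sum a_i x b_i$, and the only point requiring care is the bookkeeping that distinguishes the external unit of $\widehat{A}_x$ from the image of $1_A$ under the embedding $A \hookrightarrow \widehat{A}_x$ into the $A_x$-summand. It is worth noting that the argument in fact never uses finite-dimensionality of $A$: the hypothesis $AxA = A$ alone is enough to produce the splitting, since $1_A$ lies in the two-sided ideal generated by $x$.
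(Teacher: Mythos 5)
Your proof is correct, and it takes a genuinely different route from the one in the paper. You write $1_A=\sum_{i=1}^n a_i x b_i$ using $AxA=A$ and directly exhibit ${}_{\psi_1}A$ as a direct summand of the free module $\widehat{A}_x^{\,n}$ via the split surjection $\pi(u_1,\dots,u_n)=\sum\psi_1(u_i)b_i$ with section $m\mapsto(ma_1,\dots,ma_n)$; the $\widehat{A}_x$-linearity checks (including the bookkeeping with the external unit) and the computation $\sum (ma_i)xb_i=m$ are all correct, and the right-module case is indeed symmetric via $\psi_2$. The paper argues quite differently: it uses the Wedderburn--Malcev decomposition $A=S\oplus R(A)$, reduces $\Delta$ within its double coset $U(A)\backslash A/U(A)$ to a suitable representative $s+r$ with $s^2=s$, $sr=rs=0$ (Lemma \ref{convform}), translates $A\Delta A=A$ into the condition $I_i\neq\emptyset$ for every simple block, and then realizes $A$ as a direct sum of modules $B*_{\Delta}e^{j_i}_i\cong Ae^{j_i}_i$ generated by idempotents of the homotope, which are projective. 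Your argument is more elementary and strictly more general: as you note, it never uses finite-dimensionality, so it proves that $AxA=A$ implies well-temperedness for an arbitrary unital associative algebra (finite-dimensionality remains essential only for the converse implication mentioned in the introduction, and for the proposition that one-sided projectivity of $B^+$ forces $A\Delta A=A$). What the paper's structural approach buys in exchange is explicit information that is reused later: the suitable form of $\Delta$, the idempotents $e^{j_i}_i$ of the homotope, and the block data $I_i$, ${\rm rank}_i(\Delta)$ feed into the subsequent description of the radical and of the irreducible representations of $\widehat{A}_{\Delta}$, which your splitting argument does not provide.
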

Note that if $A$ is a finite-dimensional algebra and $A$ is left and right projective $\widehat{A}_x$ - module, then $AxA = A$.
Thus, we get the complete description of well-tempered elements in terms of two-sided ideals of the algebra $A$.
Also, note the following property of non-well-tempered elements: if $x$ is not well-tempered element then global homological dimension of $\widehat{A}_x$ if $x$ is infinite.

Our article is organized as follows. There are two sections of the article.
Firstly, we recall the concepts of homotopy and isotopy of algebras. We remind the classification of Bruck and using Popov's result \cite{Pop} we get that generic algebra is left-simple and right-simple. Further, we recall the notion of $a$-homotope and formulate some results on it. Second chapter is devoted to study well-tempered elements in finite-dimensional case. In particular, proof of the main theorem is in this section. Last parts of this chapter are devoted to commutative algebras. In this case the notion of well-tempered elements is trivial. Thus, we consider homotopes of infinite-dimensional noetherian algebras with respect to non-well-tempered element. In this case ${\rm A - Mod}$ is not a full subcategory of ${\rm \widehat{A}_x - Mod}$, but there is a common full subcategory of ${\rm A - mod}$ and ${\rm \widehat{A}_x - mod}$.

{\bf Acknowledgements.} I am grateful to Alexei Bondal and Ilya Karzhemanov for very fruitful discussions and support. Author was partially supported by the HSE University Basic Research Program and the Russian Academic Excellence Project '5-100 and partially supported by grant RFBR - 18-01-00908

%\section{Preliminary remarks.}

\section{Homotopy of algebras: previous remarks.}
\subsection{Isotopy of algebras. Generic algebras.}
\label{homotop}

Firstly, recall the notion of homotopy and isotopy of algebras.
%Algebras are not presumed to be associative.

Fix $d$-dimensional vector space $V$ over algebraically closed field $k$ of characteristic zero.
It is easy that tensor $m \in {\cal M} =  V^* \otimes V^* \otimes V = {\rm Hom}_k(V\otimes V, V)$ defines multiplication law.
Denote by $(V,m)$ the algebra with fixed multiplication law $m$.
%There is a well-defined action of ${\bf G} = {\rm GL}(V)^{\times 3}$ on ${\cal M}$.
%Classification of the orbits of the action ${\bf G}$ on ${\cal M}$ is very hard problem which is completely solved only in the case $d \le 3$ \cite{BP}, \cite{Ng}.

%From other hand,
Albert \cite{Alb} introduced the notion of isotopy of algebras as follows. Algebras $(V,m_1)$ and $(V,m_2)$ are {\it isotopic} iff there are bijective linear maps $f_i \in {\rm Aut}(V), i = 1,2,3$ such that $f_1(m_1(v',v'')) = m_2(f_2(v'),f_3(v'')$ for any $v',v'' \in V$.
Also, there is a notion of homotopy of two algebras. Namely, algebras $(V,m_1)$ and $(V,m_2)$ are {\it homotopic} iff there is a set of three linear maps: $f_i \in {\rm End}_k(V), i = 1,2,3$ such that $f_1(m_1(v',v'')) = m_2(f_2(v'),f_3(v''))$ for any $v',v'' \in V$. It is easy that the notions of homotopy and isotopy are generalizations of homomorphism and isomorphism of algebras. Actually, if $f_1 = f_2 = f_3 = f$ then $f: V \to V$ is a homomorphism (if $f$ is bijective then $f$ is isomorphism) of algebras.

Consider group ${\bf G} = GL(V)^{\times 3}$ with natural action on ${\cal M}$.
It is easy that isotopic classes are in bijection with ${\cal M}/{\bf G}$. If we consider diagonal group $G \subset {\bf G}$ and its action on ${\cal M}$, we get that isomorphic classes of algebras on $V$ are in bijection with points ${\cal M}/G$.
%There is a well-defined action of ${\bf G} = {\rm GL}(V)^{\times 3}$ on ${\cal M}$.
Note that classification of the orbits of the action ${\bf G}$ on ${\cal M}$ is very hard problem which is completely solved only in the case $d \le 3$ (cf. \cite{BP}, \cite{Ng}).

%If $f_i$ are bijective then $\bf f$ is called isotopy.
%Bruck introduced the following classification of algebras:

Recall the following well-known results about isotopy of algebras.
Consider algebra $(V,m)$. If there are elements $a,b \in V$ such that $l_a = m(a,-)$ and $r_b = m(-,b)$ are invertible operators, then algebra $(V,m)$ is isotopic to some unital algebra $(V,m')$. Actually, define $m'$ by formula: $m'(x,y) = m(r^{-1}_b(x), l^{-1}_a(y))$. One can check that $m(a,b)$ is a unit of algebra $(V,m')$.  Note that converse statement is true. These statements are called by {\it Kaplanski's trick}.
It is clear that algebras may be divided into four classes under isotopy (see \cite{Bruck}):
\begin{enumerate}
    \item{Algebras with at least one left invertible and one right invertible element}
    \item{Algebras with at least one left invertible but with no right invertible element}
    \item{Algebras with no left invertible but at least one right invertible element}
    \item{Algebras with no left invertible and with no right invertible element.}
\end{enumerate}
%Denote by $Z_i, i = 1,2,3,4$ the sets parameterizing algebras of $i$th class, respectively. %It is easy that $Z_i$ are ${\bf G}$ - sets.

%Consider ${\cal M} = {\rm Hom}_k(V \otimes V,V)$.
We have the following maps: $l, r:{\cal M} \times V \to {\rm End}_k(V)$ given by formulas: $(m,v) \mapsto r_v = m(-,v)$ and $(m,v) \mapsto l_v = m(v,-)$. It is easy that this morphism is surjective.
Denote by ${\cal M}_1$ the set of $m$ such that $(V,m)$ is an algebra of first type.
Standard arguments of algebraic geometry give us the following proposition:
\begin{Proposition}
\label{ginv}
\begin{itemize}
    \item{${\cal M}_1 \subset {\cal M}$ is Zarisski-open dense subset}
    \item{For fixed $m \in {\cal M}_1$ and generic $v \in V$ operators $l_v = m(v,-)$ and $r_v = m(-,v)$ are invertible.}
    \item{For fixed nonzero $v \in V$ there is Zarisski-open dense subset ${\cal M}(v) \subset {\cal M}$ such that $l_v = m(v,-)$ and $r_v = m(-,v)$ are invertible.
    }
\end{itemize}
\end{Proposition}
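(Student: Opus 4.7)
The plan is to exploit the fact that ${\cal M} = V^* \otimes V^* \otimes V \cong {\mathbb A}^{d^3}$ is an affine space, hence irreducible, so every nonempty Zariski-open subset of ${\cal M}$ is automatically dense. All three assertions then split into a density-by-irreducibility argument together with exhibition of one concrete witness.

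I would start with the third item, as it is the most explicit. Fix a nonzero $v \in V$. The rule $m \mapsto (l_v(m), r_v(m))$ defines a linear, hence polynomial, map ${\cal M} \to {\rm End}_k(V) \times {\rm End}_k(V)$, and ${\cal M}(v)$ is by construction the preimage of the open subset $GL(V) \times GL(V)$; so ${\cal M}(v)$ is open. To see it is nonempty, extend $v$ to a basis $v = e_1, e_2, \ldots, e_d$ and put any unital associative multiplication $m_0$ on $V$ for which $e_1$ is the unit (for instance the group algebra structure of ${\mathbb Z}/d{\mathbb Z}$ via $e_i \leftrightarrow i-1$); then $l_v(m_0) = r_v(m_0) = {\rm id}_V$, so $m_0 \in {\cal M}(v)$.

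The second item follows by the same pattern but with $m$ fixed and $v$ variable. Since $m \in {\cal M}_1$ there exist $a, b \in V$ with $l_a(m)$ and $r_b(m)$ invertible. The polynomial functions $V \to k$ given by $v \mapsto \det l_v(m)$ and $v \mapsto \det r_v(m)$ are therefore not identically zero (they do not vanish at $a$ and at $b$ respectively), so their product is a nonzero polynomial and its nonvanishing locus is an open dense subset of $V$ of the required form.

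For the first item the only subtle point is the openness of ${\cal M}_1$. For each fixed $v \in V$ the function $m \mapsto \det l_v(m)$ is a polynomial on ${\cal M}$, so $U_v := \{m : \det l_v(m) \neq 0\}$ is Zariski-open; hence the set of $m$ admitting at least one left-invertible element is $\bigcup_{v \in V} U_v$, a union of opens and therefore open. Intersecting with its right-handed analogue yields ${\cal M}_1$. Density is then automatic: the third item provides a nonempty open ${\cal M}(v) \subset {\cal M}_1$ inside the irreducible variety ${\cal M}$. I do not anticipate a genuine obstacle; the only place to be careful is to present ${\cal M}_1$ as a union of polynomial-nonvanishing loci rather than as a projection of a constructible set, which a priori need only be constructible.
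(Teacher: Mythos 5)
Your argument is correct, and it is exactly the ``standard arguments of algebraic geometry'' that the paper invokes without writing out: openness via non-vanishing of $\det l_v(m)$ and $\det r_v(m)$ (which are polynomial in $m$ for fixed $v$ and in $v$ for fixed $m$), density from irreducibility of the affine spaces ${\cal M}$ and $V$, and non-emptiness witnessed by a unital multiplication with the chosen $v$ as unit. Your closing remark is also the right one to make: writing ${\cal M}_1$ as a union of principal open sets $\bigcup_v U_v$ avoids any appeal to constructibility of images, so the proof is complete as stated.
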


Associative algebras play important role in isotopy classes of algebras:
\begin{Proposition}(cf. \cite{Alb})
\label{assiso}
If unital algebra $(V,m)$ is isotopic to unital associative algebra $(V,m')$. Then $(V,m)$ and $(V,m')$ are isomorphic.
\end{Proposition}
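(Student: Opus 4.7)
The plan is to exploit the units of the two algebras to massage the isotopy data $(f_1,f_2,f_3)$ into a form from which an explicit isomorphism $(V,m)\to(V,m')$ can be read off. Throughout, write $x\cdot y$ for $m'(x,y)$, let $u\in V$ be the unit of $(V,m)$ and $e'\in V$ the unit of $(V,m')$, and set $a:=f_2(u)$, $b:=f_3(u)$.

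First I would plug $v'=u$ and then $v''=u$ separately into the isotopy relation $f_1(m(v',v''))=f_2(v')\cdot f_3(v'')$. Using $m(u,v'')=v''$ and $m(v',u)=v'$, this gives the two identities
\[
f_1 = L_a\circ f_3,\qquad f_1 = R_b\circ f_2,
\]
where $L_a,R_b$ denote left and right multiplication in $(V,m')$. Bijectivity of the $f_i$ immediately upgrades to bijectivity of $L_a$ and $R_b$ as linear maps on $V$.

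The main (and essentially only) obstacle is the passage from bijectivity of $L_a,R_b$ as linear maps to the existence of two-sided inverses $a^{-1},b^{-1}$ inside $(V,m')$; this is the step that genuinely uses associativity of $m'$. I would handle it by the standard Kaplansky-style trick: surjectivity of $L_a$ furnishes some $c$ with $a\cdot c=e'$, and then the associativity-driven chain $a\cdot(c\cdot a)=(a\cdot c)\cdot a=L_a(e')$ combined with injectivity of $L_a$ forces $c\cdot a=e'$. The analogous argument for $R_b$ produces $b^{-1}$.

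Once $a^{-1}$ and $b^{-1}$ are available, I would invert the identities above to write $f_2=R_{b^{-1}}\circ f_1$ and $f_3=L_{a^{-1}}\circ f_1$, substitute back into the isotopy relation, and use associativity of $m'$ to arrive at the closed form
\[
m(v',v'')=f_1^{-1}\bigl(f_1(v')\cdot c\cdot f_1(v'')\bigr),\qquad c:=b^{-1}a^{-1}.
\]
The natural candidate for the isomorphism is then $\phi:=R_c\circ f_1$, i.e.\ $\phi(v)=f_1(v)\cdot c$: it is bijective because $c$ is invertible and $f_1$ is bijective, the identity $\phi(m(v',v''))=\phi(v')\cdot\phi(v'')$ reduces to a one-line associativity check in $(V,m')$, and the unit condition follows from $f_1(u)=f_2(u)\cdot b=a\cdot b$, so $\phi(u)=a\cdot b\cdot b^{-1}\cdot a^{-1}=e'$. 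Everything after the invertibility step is routine algebraic manipulation.
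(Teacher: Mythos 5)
Your proof is correct, and since the paper gives no argument of its own for this proposition (it simply cites Albert), your write-up is exactly the missing classical proof: reduce the isotopy to a principal one via the units, use associativity (the Kaplansky trick the paper itself mentions) to turn bijectivity of $L_a$ and $R_b$ into genuine invertibility of $a=f_2(u)$ and $b=f_3(u)$, and read off the isomorphism $\phi=R_c\circ f_1$ with $c=b^{-1}a^{-1}$. All steps check out, including the multiplicativity and unit computations for $\phi$.
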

Note that we can reformulate this statement in the terms of ${\cal M}_1$ and variety of unital associative algebras ${\cal A}$:
natural map: ${\cal A}/G \to {\cal M}_1/{\bf G}$ is injective.

Using proposition \ref{ginv}, we get the following
\begin{Corollary}
Generic algebra $(V,m)$ is isotopic to unital algebra.
\end{Corollary}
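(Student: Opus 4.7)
The plan is to combine Proposition \ref{ginv} with Kaplansky's trick, which the paper has just recalled in the discussion immediately preceding the four-class division. The interpretation of ``generic'' here should be the standard one in this algebro-geometric setup: a property holds generically on $\mathcal{M}$ if it holds on a Zariski-open dense subset. Since the notion of isotopy does not depend on the particular representative of the $\mathbf{G}$-orbit, the statement is really about an open dense, $\mathbf{G}$-invariant subset of $\mathcal{M}$.

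First I would unwind what ``isotopic to a unital algebra'' means in the language of Kaplansky's trick: by the discussion just above Proposition \ref{ginv}, an algebra $(V,m)$ is isotopic to a unital algebra precisely when there exist $a,b \in V$ such that both operators $l_a = m(a,-)$ and $r_b = m(-,b)$ are invertible, the unit of the isotopic algebra being $m(a,b)$. Hence the set of multiplications $m \in \mathcal{M}$ for which $(V,m)$ is isotopic to a unital algebra is precisely the set of $m$ admitting such $a,b$; but this is, by definition, exactly the set $\mathcal{M}_1$ of algebras of the first type.

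Next I would invoke the first bullet of Proposition \ref{ginv} directly: $\mathcal{M}_1 \subset \mathcal{M}$ is Zariski-open dense. Therefore the property ``$(V,m)$ is isotopic to a unital algebra'' holds on a Zariski-open dense subset of $\mathcal{M}$, which is exactly the statement that a generic algebra $(V,m)$ is isotopic to a unital algebra. Finally, I would remark that the open set $\mathcal{M}_1$ is automatically $\mathbf{G}$-invariant, so the statement descends to the moduli space $\mathcal{M}/\mathbf{G}$ of isotopy classes, confirming that it is well-posed as a statement about generic algebras up to isotopy.

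There is no substantive obstacle: the corollary is essentially a tautological consequence of Proposition \ref{ginv} and Kaplansky's trick once one fixes the meaning of ``generic.'' The only point worth a careful sentence is the translation between ``algebra of the first type'' (existence of one left-invertible and one right-invertible element) and the hypothesis of Kaplansky's trick (existence of invertible $l_a$ and $r_b$), which is a direct reading of definitions.
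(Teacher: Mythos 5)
Your argument is correct and is exactly the paper's (implicit) proof: Kaplansky's trick identifies ``isotopic to a unital algebra'' with membership in $\mathcal{M}_1$, and the first bullet of Proposition \ref{ginv} says $\mathcal{M}_1$ is Zariski-open dense, which is what ``generic'' means here. The paper states the corollary with no further argument, so your write-up simply makes the same deduction explicit.
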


Further, consider generic algebras. For this purpose, fix $m \in {\cal M}$.
Let ${\cal L}(m)$, ${\cal R}(m)$ and ${\cal U}(m)$ be an associative algebras of generated by space $l_v, v \in V$, $r_v, v \in V$ and both $l_v, v \in V$ and $r_v, v \in V$ respectively.
Algebra $(V,m)$ is said to be {\it right-simple} iff it contains no proper right ideals. {\it Left-simplicity} and {\it simplicity} are defined analogously.

It is easy that algebra $(V,m)$ is right-simple, (left-simple or simple) iff $V$ is a simple ${\cal R}(m)$ (${\cal L}(m)$ or ${\cal U}(m)$) - module.
Popov proved (see \cite{Pop}) that generic algebra is simple. Repeating his proof with small changes, we get the following proposition:
\begin{Proposition}
\label{gensimp}
Generic algebra $(V,m)$ is left-simple and right-simple.
\end{Proposition}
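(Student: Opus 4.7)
The plan is to mimic Popov's Zariski-dense-open argument for simplicity, but carried out separately for left ideals and for right ideals, and then intersect the two resulting open sets. The key point is that, for each fixed proper subspace $W \subset V$, the condition "$W$ is a left ideal of $(V,m)$'' (respectively, a right ideal) is linear in $m$, so incidence varieties over the Grassmannian are easy to control dimensionally.

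Concretely, fix $k$ with $0 < k < d$ and let $\mathrm{Gr}(k,V)$ denote the Grassmannian of $k$-planes in $V$. Consider the incidence variety
\[
    X_k^L = \{(m,W) \in {\cal M} \times \mathrm{Gr}(k,V) \mid m(V,W) \subset W\}.
\]
For fixed $W$, the condition $m(V,W)\subset W$ is cut out by the vanishing of the composition $V\otimes W \hookrightarrow V\otimes V \xrightarrow{m} V \twoheadrightarrow V/W$; this is a surjective linear map from ${\cal M}=\mathrm{Hom}(V\otimes V,V)$ onto $\mathrm{Hom}(V\otimes W,V/W)$, so the fiber of $X_k^L \to \mathrm{Gr}(k,V)$ over $W$ has codimension exactly $dk(d-k)$ in ${\cal M}$. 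Hence $X_k^L$ is a vector bundle over $\mathrm{Gr}(k,V)$ of total dimension
\[
    \dim X_k^L \;=\; d^3 - dk(d-k) + k(d-k) \;=\; d^3 - (d-1)k(d-k) \;<\; d^3,
\]
whenever $d\ge 2$. Since $\mathrm{Gr}(k,V)$ is projective, the projection $X_k^L \to {\cal M}$ is proper, so its image is a closed subvariety of ${\cal M}$ of dimension strictly less than $\dim {\cal M}$. Taking the union over $k=1,\dots,d-1$ produces a proper closed subvariety $Z^L \subset {\cal M}$ whose complement $U^L = {\cal M}\setminus Z^L$ is Zariski-open, dense, and consists precisely of those $m$ for which $(V,m)$ is left-simple.

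The same construction applied to the incidence variety $X_k^R$ cut out by $m(W,V)\subset W$ produces a Zariski-open dense subset $U^R \subset {\cal M}$ of right-simple multiplications, by the symmetric dimension count. Their intersection $U^L \cap U^R$ is again Zariski-open and dense in ${\cal M}$, and every $m$ in this intersection makes $(V,m)$ both left-simple and right-simple, proving the proposition.

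The only real step requiring care is the dimension estimate: one must verify that the linear map from ${\cal M}$ to $\mathrm{Hom}(V\otimes W,V/W)$ is genuinely surjective (so that the codimension of the fiber is exactly $dk(d-k)$, not merely at most), and that the strict inequality $(d-1)k(d-k)>0$ holds for all admissible $k$; these are immediate, so the only real obstacle is notational rather than substantive. Everything else is the standard incidence-variety package used by Popov in \cite{Pop}.
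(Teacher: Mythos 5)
Your proof is correct and is essentially the paper's own argument: the paper bounds the locus of algebras with an $r$-dimensional left ideal by writing it as the orbit $G\cdot M(r)$ with parabolic stabilizer $P_r$ (so $G/P_r$ is exactly your Grassmannian), arriving at the same count $d^3-(d-1)r(d-r)<d^3$, and then repeats the argument on the right. Your incidence-variety formulation with the proper projection is only a cosmetic repackaging (it additionally gives closedness of the bad locus, which the paper does not need), so no further comparison is warranted.
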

\begin{proof}
Put ${\cal M}(r)$ the set of $m$ such that $(V,m)$ has r-dimensional left ideal.
Pickup basis $v_1,...,v_d$ of $V$. Let $v^1,...,v^d$ is a dual basis of $V^*$.
Let $V_r$ be linear span of $v_1,...,v_r$. Let $M(r)$ be the set of algebras $(V,m)$  such that $V_r$ is a left ideal of $(V,m)$. It is easy that ${\cal M}(r) = G \cdot M(r)$, where $G$ is a diagonal subgroup of ${\bf G}$.
%Let us compute the dimension of $M(r)$.
Let $m = \sum c^l_{ij} e^i \otimes e^j \otimes e_l$. Since $V_r$ is a left ideal then $c^s_{ij} = 0$ for $s > r$ and $j \le r$. Thus, ${\rm dim}_kM(r) = d^3 - rd^2+r^2d$. Further, stabilizer of $M(r)$ is a parabolic subgroup $P_r$ of $G$ of dimension $d^2-rd+r^2$. Therefore, ${\rm dim}_k{\cal M}(r) \le {\rm dim}_kG - {\rm dim}_kP_r + {\rm dim}_kM(r) = d^2 - (d^2-dr+r^2)+d^3-rd^2+r^2d = d^3 - r(d-1)(d-r) < d^3$ if $d > 1$ and $r \ge 1$ and $r < d$.
One can prove that generic algebra is right-simple analogously.
\end{proof}

Algebra $(V,m)$ is {\it isotopically left(right)-simple} if any isotope of $(V,m)$ is left(right)-simple.
Recall the following theorem of Bruck (see \cite{Bruck}):
\begin{Proposition}
If right-simple(left-simple) algebra $(V,m)$ has right unit (left unit) then $(V,m)$ is isotopically right-simple (left-simple).
\end{Proposition}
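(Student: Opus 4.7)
The plan is to translate the right-ideal structure of an arbitrary isotope $(V,m')$ back to a right-ideal structure of $(V,m)$, exploiting the right unit $e$ to align the various twists introduced by the isotopy. Write the isotopy as
\[
m'(x,y) \;=\; f_1\,m(f_2^{-1}x,\, f_3^{-1}y),
\]
so that the right multiplication operators transform as
\[
r'_v \;=\; m'(-,v) \;=\; f_1\, r_{f_3^{-1}(v)}\, f_2^{-1}.
\]
Because $e$ is a right unit of $(V,m)$ we have $r_e = \mathrm{id}_V$, hence the specific choice $v = f_3(e)$ gives
\[
S \;:=\; r'_{f_3(e)} \;=\; f_1 f_2^{-1}.
\]
In particular $S$ is invertible and lies in ${\cal R}(m')$.

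Now let $I \subset V$ be any right ideal of $(V,m')$, i.e.\ an ${\cal R}(m')$-submodule. Since $S(I) \subset I$ and $S$ is an invertible endomorphism of the finite-dimensional space $V$, we get $S(I) = I$, which rewrites as $f_2^{-1}(I) = f_1^{-1}(I) =: J$. For every $v \in V$ the containment $r'_v(I) \subset I$ becomes
\[
f_1\, r_{f_3^{-1}(v)}\, f_2^{-1}(I) \;\subset\; I,
\]
and using $f_2^{-1}(I) = J$ together with $f_1^{-1}(I)=J$ this simplifies to $r_w(J) \subset J$ for every $w \in V$. Thus $J$ is a right ideal of $(V,m)$. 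By right-simplicity of $(V,m)$ either $J=0$ or $J=V$, and therefore $I=0$ or $I=V$. This shows $(V,m')$ is right-simple, and since $(V,m')$ was an arbitrary isotope, $(V,m)$ is isotopically right-simple.

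The left-simple case is obtained by the symmetric computation $l'_v = f_1\, l_{f_2^{-1}(v)}\, f_3^{-1}$ and $l'_{f_2(e)} = f_1 f_3^{-1}$, which forces any left ideal $I$ of $(V,m')$ to satisfy $f_1^{-1}(I)=f_3^{-1}(I)$ and hence to correspond to a left ideal of $(V,m)$. The only real step of the proof is the opening observation that the presence of a one-sided unit produces a distinguished invertible element of ${\cal R}(m')$, namely $f_1 f_2^{-1}$; once this is in hand the transfer of ideals back to $(V,m)$ is essentially mechanical. I expect no serious obstacle beyond keeping track of which of the three isotopy maps acts where, since non-associativity is irrelevant to the argument — only the module structure over ${\cal R}(m)$ and ${\cal R}(m')$ matters.
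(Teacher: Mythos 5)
Your proof is correct, and it rests on the same key observation as the paper's --- a one-sided unit turns one specific multiplication operator of the isotope into an invertible twist ($S = f_1f_2^{-1} = r'_{f_3(e)}$ in your notation; $g_3^{-1} = l'_{g_2(e)}$ in the paper's) --- but the way you exploit it is genuinely different. The paper first reduces to principal isotopes (i.e.\ normalizes $f_1 = \mathrm{id}$), then upgrades $g_3^{-1} \in {\cal L}(m')$ to $g_3 \in {\cal L}(m')$ (which uses finite-dimensionality) and concludes the inclusion of multiplication algebras ${\cal L}(m) \subseteq {\cal L}(m')$, so that simplicity of $V$ as an ${\cal L}(m)$-module immediately gives simplicity as an ${\cal L}(m')$-module: after the normalization, every ideal of the isotope is literally an ideal of the original algebra. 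You instead keep all three isotopy maps and transport each right ideal $I$ of $(V,m')$ to the right ideal $J = f_2^{-1}(I) = f_1^{-1}(I)$ of $(V,m)$, with finite-dimensionality entering only through the step $S(I)\subseteq I \Rightarrow S(I)=I$. The paper's route buys the cleaner structural statement about multiplication algebras; yours buys self-containedness, since you avoid both the (merely asserted) reduction to principal isotopes and the fact that the inverse of an invertible element lies in the finite-dimensional subalgebra it generates, at the cost of some bookkeeping with the twists. Your pairing of right unit with right-simplicity (and left with left) matches the statement, and all the operator identities you use ($r'_v = f_1 r_{f_3^{-1}(v)} f_2^{-1}$, $l'_v = f_1 l_{f_2^{-1}(v)} f_3^{-1}$) are correct in the paper's convention for isotopy.
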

\begin{proof}
Let $(V,m)$ be a left-simple algebra.
It is easy that one can consider only principal isotopes. Consider principal isotope $(V,m')$ defined by rule: $m'(x,y) = m(g^{-1}_2(v),g^{-1}_3(v))$ for some $g_2,g_3 \in GL(V)$. Consider algebra ${\cal L}(m')$. It is easy that $l'_v = m'(v,-) = l_{g^{-1}_2(v)} \circ g^{-1}_3 \in {\rm End}_{\mathbb F}(V)$ for $v \in V$. Since $(V,m)$ is unital algebra, we get that $g_3 \in {\cal L}(m')$. Thus, ${\cal L}(m) \subset {\cal L}(m')$ and hence, $V$ is a simple ${\cal L}(m')$ - module. q.e.d.
\end{proof}
Using this proposition, proposition \ref{gensimp} and proposition \ref{ginv}, we get that
\begin{Corollary}
Generic algebra $(V,m), m \in {\cal M}$ is isotopically left and right-simple.
\end{Corollary}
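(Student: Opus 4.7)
The plan is to combine the three preceding propositions. By Proposition \ref{ginv}, the subset $\mathcal{M}_1 \subset \mathcal{M}$ of multiplications admitting both a left-invertible and a right-invertible element is Zariski-open and dense; by Proposition \ref{gensimp}, so are the subsets of left-simple and of right-simple multiplications. I would restrict attention to their common intersection $\mathcal{O}$, which is again open and dense in $\mathcal{M}$, and fix $m \in \mathcal{O}$.

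For such $m$, choose $a, b \in V$ with $l_a, r_b \in \mathrm{GL}(V)$ and apply Kaplansky's trick to produce a unital principal isotope $(V, m')$ defined by $m'(x,y) = m(r_b^{-1}(x), l_a^{-1}(y))$, with unit $m(a,b)$. The main step, and the principal obstacle, is to show that this unital isotope inherits left- and right-simplicity. For left-simplicity: the generators $l'_v = l_{r_b^{-1}(v)} \circ l_a^{-1}$ of $\mathcal{L}(m')$ show that any $\mathcal{L}(m')$-submodule $I \subset V$ satisfies $l_w(l_a^{-1}I) \subset I$ for every $w \in V$; writing $J = l_a^{-1}(I)$ one obtains $\mathcal{L}(m) \cdot J \subseteq I$, so $\mathcal{L}(m) \cdot J$ is an $\mathcal{L}(m)$-submodule of $V$. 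Simplicity of $V$ as an $\mathcal{L}(m)$-module gives $\mathcal{L}(m) \cdot J \in \{0, V\}$; in the first case, since $l_a \in \mathcal{L}(m)$ is invertible, $l_a(J) = 0$ forces $J = 0$ and hence $I = 0$, while in the second case $I \supseteq V$, so $I = V$. The right-simple case is symmetric, using invertibility of $r_b$ and the generators of $\mathcal{R}(m')$.

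Once $(V, m')$ is seen to be unital, left-simple, and right-simple, the preceding proposition of Bruck applies to it and shows that every isotope of $(V, m')$ is left- and right-simple. Since $(V, m)$ is itself an isotope of $(V, m')$ and isotopy is an equivalence relation, every isotope of $(V, m)$ is also an isotope of $(V, m')$, and therefore left- and right-simple. Thus the generic $m \in \mathcal{M}$ yields an algebra that is isotopically left- and right-simple, as claimed.
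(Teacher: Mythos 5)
You follow the same route as the paper, whose own proof of this Corollary is just the one-line combination of Proposition \ref{ginv}, Proposition \ref{gensimp} and Bruck's proposition, and you correctly identify the point such an argument must pass through: that the unital Kaplanski isotope $(V,m')$ of a generic $(V,m)$ is again left- and right-simple. But your proof of that step has a genuine gap. From $\mathcal{L}(m')$-invariance of $I$ you only obtain $l_w(J)\subseteq I$ for the \emph{linear generators} $l_w$, $w\in V$, where $J=l_a^{-1}(I)$; the asserted containment $\mathcal{L}(m)\cdot J\subseteq I$ does not follow, since $\mathcal{L}(m)$ contains products $l_{w_1}l_{w_2}\cdots$ and $I$ is not known to be $\mathcal{L}(m)$-stable: $l_{w_1}l_{w_2}(J)\subseteq l_{w_1}(I)$, and nothing puts $l_{w_1}(I)$ back inside $I$. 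Nor can you replace $\mathcal{L}(m)\cdot J$ by $m(V,J)=\sum_w l_w(J)$, because in a non-associative algebra that subspace need not be $\mathcal{L}(m)$-invariant. So the case $\mathcal{L}(m)\cdot J=V$ does not yield $I=V$, and the dichotomy collapses.

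Moreover, the transfer statement you are trying to prove is false at this level of generality, so no purely formal argument from ``$(V,m)$ is left- and right-simple and $l_a,r_b$ are invertible'' can close the gap. For $d=2$, $\mathcal{L}(m')$ is the unital algebra generated by the single operator $l'_x$ (where $\{e,x\}$ is a basis with $e$ the unit of $m'$), hence commutative, hence has an invariant line over an algebraically closed field, even though the generic two-dimensional algebra is left- and right-simple; and for any $d\ge 2$ one can choose $m$ with $\mathcal{L}(m)$ acting irreducibly, $l_a$ invertible, and $m(V,u)\subseteq kv$ for $u=l_a^{-1}(v)$, which makes $kv$ a proper left ideal of the unital isotope $(V,m')$. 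Hence simplicity of the unital isotope must itself be established as a genericity statement, for instance by a dimension count on ${\bf G}\cdot M(r)$ in the spirit of the proof of Proposition \ref{gensimp} (such a count also indicates the assertion requires $d\ge 3$: in dimension $2$ no unital algebra over an algebraically closed field is left-simple, so the generic algebra cannot be isotopically left-simple there). The paper's own proof does not spell this step out either, but your argument as written does not establish it.
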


%Сlass of unital associative algebras plays important role in isotopy of algebras.
%Actually, Albert proved the following property:
%\begin{Proposition}
%If unital algebra $(V,m)$ is isotopic to unital %associative algebra $(V,m')$. Then $(V,m)$ is %isomorphic to $(V,m')$. Unital associative %algebra may be isotopic to non-unital %non-associative algebra.
%\end{Proposition}
This corollary demonstrates the difference between associative algebras and non-associative algebras. There are many non-isotopic simple non-associative algebras but simple unital associative algebra is one up to isotopy.
%Denote by ${\cal M}^{ass}(V)$ the variety parameterizing of unital associative multiplication law on $V$.
Also, it is well-known that affine scheme parameterized unital associative algebras is reducible. Asymptotically, dimension of any components is less or equal than $\frac{4}{27}d^3 + o(d^3)(d \to \infty)$ (see \cite{Ner}). It was shown for any $d$ there is the component consisting of metabelian algebras, constructed by Vergne (see \cite{Verg}). Asymptotic of dimension of this component is the same. Also, recall that simple unital associative algebra is rigid, and hence, component corresponding to simple algebra has dimension $d^2-1$. It means that generic unital associative algebra is not simple.

\subsection{$a$-homotopes.}

%In this subsection we study property of variety parameterizing unital algebras. Also, we study homotopes of special type.
In this subsection we introduce the partial case of homotopy - $a$-homotope.

Fix algebra $(V,m)$ and $a \in V$. Define maps $L(a), R(a): {\cal M} \to {\cal M}$ by the following formulas:
\begin{equation}
L(a)m(v',v'') := m(v',l_{a}(v'')), R(a)m(v',v'') := m(r_{a}(v'),v'')
\end{equation}
Algebras $(V,L(a)m)$ ($(V,R(a)m))$ are called by {\it left $a$-homotope}({\it right $a$-homotope}) of algebra $(V,m)$ with respect to element $a$. For simplicity, we will call left or right $a$-homotope by homotope if it does not lead to confusion. Sometimes, $a$-homotope is called {\it mutation}.
These notions were established in fifties and studied by many investigators.  Of course, if element $a$ is left-invertible (or right-invertible) then algebra $(V,L(a)m)$ (or $(V,R(a)m)$) and $(V,m)$ are isotopic.

Note that there are several generalizations of the notion of homotope. For example, $(u,v)$ - homotope (see \cite{McC}) is an algebra with multiplication law defined by formula: $m'(v',v''):= m(r_u(v'),l_v(v''))$, $(a,b)$ - mutation $S(a,b)$ defined by rule: $S(a,b)m(v',v'') = L(a)m(v',v'') - R(b)m(v',v'')$ etc.

%Consider endomorphism of $V$ of special type. For this purpose, fix non-zero $v \in V$.

%Using operators $l_v$ and $r_v$ one can define the following morphisms $R(v), L(v): {\cal M} \to {\cal M}$ defined by rules: $R(v): m \mapsto m(r_v(-),-), L(v): m \mapsto m(-,l_v(-))$.
%Using proposition \ref{ginv}, we obtain that $L(v) (m)$ and $R(v) (m)$ are isogenous to $m$ for generic $m \in {\cal M}$. %Thus, $R(v)$ and $L(v)$ are morphisms of ${\cal M}$.
%Note the following properties of $L(v)$. Fix $g \in GL(V)$. Put $m' = (1,1,g) \circ m$, i.e. $m'(x,y) = m(x,g^{-1}(y))$.
%$$
%R(g(v)) \circ (1,1,g) m (x,y) = R_{g(v)} m'(x,y) = m'(m'(x,g(v)),y) %= m(m(x,v),g^{-1}(y)).
%$$
%One can show that $(1,1,g) \circ R(v) m(x,y) = m(m(x,v),g^{-1}(y))$.
%It means that $R(g(v)) \circ (1,1,g) = (1,1,g) \circ R(v)$ for any $g \in GL(V)$ and $v \in V\setminus{0}$. Analogously, $L(g(v)) \circ (1,g,1) = (1,g,1) \circ L(v)$. Therefore, geometric properties of $L(v), v \in V \setminus{0}$ are the same.

Note the following property of morphisms $R(v)$ and $L(v)$ for nonzero $v \in V$:
\begin{Proposition}
For $v \in V\setminus{0}$ morphisms $R(v)$ and $L(v)$ are dominant. Moreover, ${\rm deg}L(v) = {\rm deg} R(v) = 2^d$.
\end{Proposition}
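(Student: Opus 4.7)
The plan is to choose coordinates adapted to $v$, translate $L(v)$ into an explicit polynomial map in the structure constants, and reduce the fiber computation to the problem of extracting a square root of a single $d \times d$ matrix.

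Pick a basis $e_1,\ldots,e_d$ of $V$ with $e_1 = v$ (possible since $v \neq 0$), and write $m(e_i,e_j) = \sum_k c^k_{ij} e_k$. Let $M_i = l_{e_i}$ be the matrix of left multiplication by $e_i$, so $(M_i)_{kj} = c^k_{ij}$. Unwinding the definition $L(v)m(e_i,e_j) = m(e_i, l_v(e_j))$ gives new structure constants $d^l_{ij} = \sum_k c^l_{ik} c^k_{1j}$, or equivalently $M'_i = M_i M_1$ for each $i$, where $M'_i$ denotes the left-multiplication matrices of $L(v)m$. Specializing to $i = 1$ yields the key identity
\begin{equation*}
M'_1 = M_1^2.
\end{equation*}

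To invert the system, suppose $m' \in {\cal M}$ is given with matrices $M'_i$. Any preimage $m$ must satisfy $M_1^2 = M'_1$, and once such $M_1$ is chosen (it is invertible for a generic target), the formulas $M_i = M'_i M_1^{-1}$ for $i \geq 2$ determine the remaining $M_i$ uniquely. Hence the fiber of $L(v)$ over $m'$ is in bijection with the set of invertible square roots of $M'_1$, and for $m'$ in the Zariski-open subset where $M'_1$ is diagonalizable with $d$ pairwise distinct nonzero eigenvalues there are exactly $2^d$ such square roots, obtained by independently choosing a sign of $\sqrt{\lambda_i}$ on each eigenspace. This shows that $L(v)$ is dominant with ${\rm deg}\,L(v) = 2^d$. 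The case of $R(v)$ is entirely analogous, using the right-multiplication matrices $N_j = r_{e_j}$: one checks $N'_j = N_j N_1$, and in particular $N'_1 = N_1^2$, reducing again to square-root extraction.

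The two technical points requiring a short verification are that the linear map $m \mapsto M_1$ surjects onto ${\rm End}_k(V)$, so that the generic condition on $M'_1$ really cuts out a Zariski-dense subset of ${\cal M}$ (immediate, since $M_1$ is an independent block of structure constants), and that distinct square roots of a generic $M'_1$ yield distinct preimages $m$ (clear, because $M_1$ is recoverable from the preimage). I expect no substantive obstacle beyond these; the main content of the proof is the observation that all the nonlinearity of $L(v)$ concentrates in the single matrix equation $M'_1 = M_1^2$, turning the problem into elementary linear algebra.
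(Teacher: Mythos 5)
Your proof is correct and takes essentially the same route as the paper: fix a basis whose first vector is $v$, reduce the fiber equations to $M'_i = M_i M_1$ so that all nonlinearity sits in $M'_1 = M_1^2$, count the $2^d$ square roots of a generic $M'_1$ with pairwise distinct (nonzero) eigenvalues, and recover the remaining $M_i$ uniquely. The paper phrases this with the right-multiplication operators for $R(v)$ and leaves the generic-fiber details terse, but your added verifications (surjectivity of $m \mapsto M_1$ and injectivity of square root $\mapsto$ preimage) are exactly the points it implicitly uses.
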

\begin{proof}
It is sufficient to prove that ${\rm deg}R(v) = 2^d$. Fix nonzero $v \in V$.
Let us prove that $|R(v)^{-1}(m')| = 2^d$ for generic $m' \in {\cal M}$.

Assume that $m' = R(v)(m)$. Fix basis $v_1 = v,...,v_d$ of $V$. %Assume that $v = \sum^d_{i=1} x_i v_i \in V$.
Denote by $R_i, R'_i$ the operators $m(-,v_i), m'(-,v_i) \in {\rm End_k}(V)$.
One can deduce that
\begin{equation}
\label{rrr}
R'_i = R_i R_1 , i = 1,..,d.
\end{equation}

Further, solve equations (\ref{rrr}) for generic $R'_i$.
Denote by $S = \{ s \in GL(V)| \lambda_i(s) \ne \lambda_j(s), i,j = 1,...,d\}$ where $\lambda_i(s), i = 1,...,d$ are eigenvalues of $s$. It is easy that $S$ is a dense open subvariety of $GL(V)$ (and hence, ${\rm End}_k(V)$).
%Assume that $x_1 \ne 0$ and $R'_1 \in S$. Let us rewrite the system in the following way:
%$$
%(\sum^d_{i=1} x_i R_i)(\sum^d_{i=1}x_i R_i) = R'_1,
%$$
%$$
%R_i(\sum^d_{i=1} x_i R_i) = R'_i, i = 2,...,d
%$$
One can show that if $R'_1 \in S$ then there are $2^d$ solutions of first equation. Also, one can find $R_i, i = 2,...,d$ uniquely from another equations. Analogous arguments prove the rest.
%Standard arguments give us that there are $2^d$ solutions of first equation. Also, one can find $R_i, i = 2,...,d$ uniquely from another equations. Analogous arguments prove the rest.
\end{proof}

Note the following categorial description of homotopes.
Let ${\rm Alg}$ be a category of algebras (not necessary associative).%, i.e. algebra is a pair $(V,m)$, where $V$ is a vector space, $m: V \otimes V \to V$ is a bilinear map.
Morphisms of ${\rm Alg}$ are homomorphisms of algebras.
Consider category ${\cal C}$ defined as follows. Objects of ${\cal C}$ are pairs $(A,a)$, where $A \in {\rm Alg}$, $a \in A$ is an element of $A$. Morphism $\phi: (A,a) \mapsto (A',a')$ is a morphism of algebras such that $\phi(a)= a'$. Consider map $L: {\cal C} \to {\rm Alg}$ defined by formula:
$L: (A,a) \mapsto A'$, where $A'$ is left homotope of $A$ with respect to $a$. Analogously, one can define map $R: {\cal C} \to {\rm Alg}$, where $R(A,a)$ is right homotope of $A$ with respect to $a$.

\begin{Proposition}
\label{commhom}
\begin{itemize}
\item{Maps $L,R: {\cal C} \to {\rm Alg}$ are well-defined functors.}
\item{If $I$ is a two-sided ideal of algebra $A$, then $I$ is a two-sided ideal of algebras $L(A,a)$ and $R(A,a)$ for any $a \in A$.}
\item{Consider algebra $A$, its quotient $A/I$ by two-sided ideal $I$ and natural morphism: $\phi: A \to A/I$. Then we have the isomorphism of algebras: $L(A,a)/I \cong L(A/I, \phi(a))$.}
\end{itemize}
\end{Proposition}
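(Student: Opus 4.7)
The plan is to verify each of the three items from the definitions, exploiting that $L$ and $R$ leave the underlying vector space and the underlying linear maps unchanged.

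For the functoriality in item (1), I would first observe that on objects $L$ is defined by a formula that only uses $m$ and $a$, so it makes sense. The content is that $L$ sends morphisms of $\mathcal{C}$ to algebra homomorphisms. Given $\phi:(A,a)\to(A',a')$ in $\mathcal{C}$, i.e.\ a homomorphism of algebras satisfying $\phi(a)=a'$, I would check directly that
\[
\phi\bigl(L(a)m(x,y)\bigr)=\phi\bigl(m(x,m(a,y))\bigr)=m'\bigl(\phi(x),m'(\phi(a),\phi(y))\bigr)=L(a')m'\bigl(\phi(x),\phi(y)\bigr),
\]
using that $\phi$ is multiplicative and $\phi(a)=a'$. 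The identity condition $\phi(a)=a'$ is essential here; this is the reason one cannot simply view $L$ as a functor from $\mathrm{Alg}$. Preservation of identities and composition is automatic, since $L$ acts as the identity on underlying set-maps. The argument for $R$ is symmetric, applying $r_{a}$ on the left factor instead.

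For item (2), given a two-sided ideal $I\subset A$ and $a\in A$, I would verify the two ideal conditions with respect to the new product $x*y:=m(x,m(a,y))$. If $x\in I$, then $m(a,y)\in A$ and $x*y=m(x,m(a,y))\in I$ since $I$ is a left ideal. If $y\in I$, then $m(a,y)\in I$ (right ideal) and therefore $m(x,m(a,y))\in I$ (left ideal), so $x*y\in I$. Hence $I$ is a two-sided ideal of $L(A,a)$. The same reasoning with $r_a$ handles $R(A,a)$.

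For item (3), I would deduce the isomorphism from items (1) and (2) rather than redoing the computation. The canonical projection $\phi:A\to A/I$ is a morphism $(A,a)\to(A/I,\phi(a))$ in $\mathcal{C}$, so by item (1) it is an algebra homomorphism $L(A,a)\to L(A/I,\phi(a))$. It is surjective, and its kernel as a linear map is $I$; by item (2), $I$ is a two-sided ideal in $L(A,a)$, so the first isomorphism theorem yields $L(A,a)/I\cong L(A/I,\phi(a))$. The analogous statement for $R$ would be stated the same way.

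There is no serious obstacle: all three statements are formal consequences of the defining formula $L(a)m(x,y)=m(x,m(a,y))$ and the compatibility of algebra homomorphisms with the fixed element $a$. The only point worth flagging is that item (1) genuinely requires the morphisms of $\mathcal{C}$ to preserve the distinguished element, so the category $\mathcal{C}$ (rather than $\mathrm{Alg}$) is the correct source for $L$ and $R$.
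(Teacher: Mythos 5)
Your proof is correct and is exactly the direct verification the paper intends: its own proof of Proposition \ref{commhom} is just the word ``Straightforward,'' and your computation for functoriality, the ideal absorption, and the first isomorphism theorem argument for item (3) fills in precisely those routine checks. One harmless slip: in item (2) your labels are swapped (for $x\in I$ the containment $m(x,m(a,y))\in I$ uses that $I$ absorbs multiplication on the right, and $m(a,y)\in I$ for $y\in I$ uses absorption on the left), but since $I$ is two-sided both absorptions hold and the argument stands.
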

\begin{proof}
Straightforward.
\end{proof}
%It can be shown in usual way that  $L, R: {\cal C} \to {\rm Alg}$ are functors.
%Note that if $I$ is a two-sided ideal of algebra $A$ then $I$ is a two-sided ideal of $L(A,a)$ and $R(A,a)$ for any $a \in A$.
%Consider algebra $A$, its quotient $A/I$ by two-sided ideal $I$ and natural morphism: $\phi: A \to A/I$. One can show that $L(A,a)/I \cong L(A/I, \phi(a))$.

%Denote by ${\rm Assoc}$ and ${\rm Com}$ the categories of associative and commutative associative $k$-algebras, respectively. Consider associative algebra $A$.
%It is easy that $L(A,a) = R(A,a)$ for any element $a \in A$. For simplicity, denote by $A_a$ the algebra $L(A,a) = R(A,a)$ for associative algebra $A$.
%Consider full subcategories ${\cal C'} \subset {\cal C}$ and ${\cal C''} \subset {\cal C'} \subset {\cal C}$ which objects are pairs $(A,a), A \in {\rm Assoc}, a \in A$ and $(A,a), A \in {\rm Com}, a \in A$, respectively.
%Direct checking gives us the following:
%\begin{Corollary}
%Restriction of $L$ (and $R$) to ${\cal C'}$ is a functor $L|_{\cal C'}:{\cal C'} \to {\rm Assoc}$. Analogously, $L|_{\cal C''}: {\cal C''} \to {\rm Com}$. Thus, %$A_a$ is an associative (or commutative associative) algebra for any associative (or commutative associative) algebra $A$ and element $a \in A$
%\end{Corollary}

\section{Homotopes of associative algebras.}
In this section all algebras are presumed associative.

\subsection{Homotopes of associative algebras: previous properties.}
\label{prevprop}

%Consider the case of associative algebras.
%In this case there is a categorial approach

We will write $a \cdot b$ or $ab$ instead of $m(a,b)$ in the case of associative algebras for simplicity.
It is easy that left and right $a$-homotopes are the same for associative algebras. We will call it briefly $a$-homotope (or simply homotope).
Let $A$ be an associative algebra and fix $\Delta \in A$. Consider $\Delta$-homotope $A_{\Delta}$.
Denote by $*_{\Delta}$ the multiplication law of $A_{\Delta}$. It is easy that $A_{\Delta}$ is associative algebra.
%Denote by $A_{\Delta}$ the homotope (left and right) of $A$ with respect to element $\Delta$.
%We will say that vector space $A$ with multiplication law $*_{\Delta}$ is a {\it homotope} of $A$ with respect to $\Delta \in A$. We will denote algebra $(A,*_{\Delta})$ by $A_{\Delta}$.

Using proposition \ref{assiso} and trivial calculations we get the following property of homotopes:
\begin{Proposition}
\label{invele}
Let $A$ be an associative algebra.
\begin{itemize}
    %\item{Algebra $A_{\Delta}$ for any $\Delta \in A$ is associative too.}
    \item{Assume that $c,d$ are invertible elements of algebra $A$. Denote by $\Delta'$ the element $c \cdot \Delta \cdot d$. Then algebras $A_{\Delta'}$ and $A_{\Delta}$ are isomorphic.}
    \item{Assume that $A$ has unit. In this case $A \cong A_{\Delta}$ iff $\Delta$ is invertible element of $A$.
    }
\end{itemize}
\end{Proposition}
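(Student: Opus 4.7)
For the first bullet, my plan is to construct an explicit algebra isomorphism $\phi: A_{\Delta} \to A_{\Delta'}$. Looking for a bijective linear map of the Ansatz form $\phi(a) = \alpha a \beta$ with $\alpha, \beta$ invertible in $A$, the compatibility condition $\phi(a *_{\Delta} b) = \phi(a) *_{\Delta'} \phi(b)$ expands to
\[
\alpha (a \Delta b) \beta \;=\; \alpha a \beta \cdot c\Delta d \cdot \alpha b \beta \qquad \text{for all } a,b \in A,
\]
which is forced by $\beta c = 1$ and $d \alpha = 1$. So I will set $\phi(a) := d^{-1} a c^{-1}$ and verify directly that the two products telescope. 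Bijectivity is immediate since $c$ and $d$ are invertible, and the verification is the one-line cancellation $d^{-1} a c^{-1} \cdot c\Delta d \cdot d^{-1} b c^{-1} = d^{-1}(a\Delta b) c^{-1}$.

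For the second bullet, the reverse implication ($\Leftarrow$) is a direct consequence of the first. Since $A$ is unital, the multiplication of $A_{1}$ is $a *_{1} b = a \cdot 1 \cdot b = ab$, so $A_{1} = A$. If $\Delta$ is invertible, write $\Delta = \Delta \cdot 1 \cdot 1$ and apply the first bullet with $c := \Delta$ and $d := 1$, both invertible, to obtain $A_{\Delta} \cong A_{1} = A$.

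For the forward implication ($\Rightarrow$), suppose $\psi: A \to A_{\Delta}$ is an algebra isomorphism and set $e := \psi(1_A)$. Then $e$ is a two-sided unit in $A_{\Delta}$, so $e *_{\Delta} a = a *_{\Delta} e = a$, i.e.\ $e \Delta a = a \Delta e = a$ for all $a \in A$. Specializing to $a = 1_A$ yields $e \Delta = \Delta e = 1_A$, showing that $\Delta$ is invertible in $A$ with inverse $e$.

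There is no serious obstacle to either part; the whole statement reduces to elementary cancellations once the correct Ansatz $\phi(a) = d^{-1} a c^{-1}$ is identified in (1). The only mild subtlety is that part (2) does not actually need Proposition \ref{assiso}: the existence of a unit in $A_{\Delta}$ already exhibits the inverse of $\Delta$ for free, bypassing any appeal to isotopy of unital associative algebras.
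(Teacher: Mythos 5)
Your proposal is correct and takes essentially the same route as the paper: the paper's proof of the first bullet consists precisely of the map $a \mapsto d^{-1}ac^{-1}$ that you construct, and it dismisses the second bullet as trivial. Your explicit argument for that second bullet --- taking $e=\psi(1_A)$ as a unit of $A_{\Delta}$ and reading off $e\Delta=\Delta e=1_A$ --- correctly fills in the ``trivial'' part, and you are right that it makes the appeal to Proposition \ref{assiso} mentioned before the statement unnecessary.
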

\begin{proof}
%First statement is easy.
One can check that morphism $a \mapsto d^{-1}ac^{-1}$ is an isomorphism between $A_{\Delta}$ and $A_{\Delta'}$. The rest is trivial.
\end{proof}
%We will consider unital associative algebras.
Assume that $A$ is unital.
It is clear that if $\Delta$ is not invertible then algebra $A_{\Delta}$ is not unital. Adding the identity element to algebra $A_{\Delta}$ externally, we obtain algebra $\widehat{A}_{\Delta}$. We will call algebra $\widehat{A}_{\Delta}$ by {\it augmented homotope}.

%Consider algebra $\widehat{A}_{\Delta}$ which is isomorphic as vector space to $A \oplus k 1_{\Delta}$ with multiplication $*_{\Delta}$ defined by obvious rule:
%$(a_1+x_1 \cdot 1_{\Delta}) *_{\Delta} (a_2 + x_2 \cdot 1_{\Delta}) = a_1 \Delta a_2 + x_1 a_2 + x_2 a_1 + x_1 x_2 1_{\Delta}, x_i \in k, a_i \in A$.

%For simplicity, denote by $B$ the augmented homotope $\widehat{A}_{\Delta}$ for fixed $\Delta$.
%It is easy that $B^+ = A_{\Delta}$ is a two-sided ideal of $B$. Thus, we have the following exact sequence of $B$ - bimodules:
%\begin{equation}
%\xymatrix{
%0 \ar[rr] && B^+ \ar[rr] && B \ar[rr]^{\epsilon} && k \ar[rr] && 0,
%}
%\end{equation}
%where $\epsilon$ is a natural augmentation. One-dimensional representation of $B$ given by $\epsilon$ will be called by {\it trivial}.

Further, for simplicity, denote by $B$ the augmented homotope $\widehat{A}_{\Delta}$ for fixed $\Delta$.
Consider algebra $B$ as deformation of $A \oplus k \cdot 1$. Since set of invertible elements of unital algebra $A$ is dense Zarisski-open subset of $A$, one can show that augmented homotopes as infinitesimal deformations correspond to zero element of ${\rm HH}^2(A \oplus k \cdot 1, A \oplus k \cdot 1)$. Of course, augmented homotopes as global deformations of $A \oplus k \cdot 1$ may be non-trivial. In the next sections we will study algebras $\widehat{A}_{\Delta}$ for various $\Delta \in A$.

Further, construct morphisms $\psi_i: B = \widehat{A}_{\Delta} \to A, i = 1,2$ as follows. %Consider unital algebra $A$ and fix element $\Delta \in A$. Let $B$ be augmented homotope $\widehat{A}_{\Delta}$.
%We have homotope $B^+ = A_{\Delta}$ and augmented homotope $B: = \widehat{A}_{\Delta}$.
%It is easy that $B^+$ is a two-sided ideal of $B$.
%We have the following exact sequence of $B$ - bimodules:
%\begin{equation}
%\xymatrix{
%0 \ar[rr] && B^+ \ar[rr] && B \ar[rr]^{\epsilon} && k \ar[rr] && 0,
%}
%\end{equation}
%where $\epsilon$ is a natural augmentation.
We have the following identities:
\begin{equation}
\label{assoc}
(b *_{\Delta} a_1)a_2 = b *_{\Delta}(a_1a_2), (a_1 a_2) *_{\Delta} b = a_1(a_2 *_{\Delta} b)
\end{equation}
for any $b \in B$ and $a_1,a_2 \in A$.
Using standard arguments, we get two morphisms of unital algebras: $\psi_i: B \to {\rm End}_A(A) = A, i = 1,2$,
defined by rules:
\begin{equation}
\label{psifs}
\psi_1: a \mapsto a \cdot \Delta, \psi_2: a \mapsto \Delta \cdot a.
\end{equation}
Thus, $\psi_1(B) = k\cdot 1 + A\Delta$ and $\psi_2(B) = k\cdot 1 + \Delta A$, i.e. sum of scalar space and left (or right) principal ideal.

\subsection{Recollement of abelian categories and well-tempered elements.}

In this subsection we recall the notion of recollement of abelian categories. Using \cite{BZ}, we recall the notion of well-tempered element of unital associative algebra and remind that category ${\rm \widehat{A}_{\Delta} - mod}$ is a gluing of categories ${\rm A - mod}$ and ${\rm k - mod}$.

Recollement of categories was introduced first by (see \cite{BBD}) in context of triangulated categories. Recollement situation in abelian categories appeared in the work of McPherson and Vilonen (see \cite{McV}).
Following \cite{Psa}, recall the notion of recollement situation between abelian categories ${\cal A}$, ${\cal B}$ and ${\cal C}$ is a diagram:
\begin{equation}
\xymatrix{
{\cal A} \ar[rrr]_{i}
&&& {\cal B} \ar@/_2pc/[lll]_{p} \ar@/^2pc/[lll]^{q} \ar[rrr]_{e}
&&& {\cal C} \ar@/_2pc/[lll]_{l} \ar@/^2pc/[lll]^{r}
}
\end{equation}
satisfying to conditions:
\begin{itemize}
\item{$(l,e,r)$ is an adjoint triple,}
\item{$(q,i,p)$ is an adjoint triple,}
\item{functors $i$, $l$ and $r$ are fully faithful,}
\item{${\rm Im}i = {\rm Ker}e$}
\end{itemize}

Definition of recollement situation of triangulated categories is the same. Psaroudakis showed that if
\begin{itemize}
\item{abelian categories ${\cal A}, {\cal B}, {\cal C}$ are in recollement situation}
\item{they have enough projective objects}
\item{functors satisfy some natural conditions (see for details \cite{Psa})}
\end{itemize}
then bounded derived categories ${\rm D^b({\cal A})}, {\rm D^b({\cal B})}, {\rm D^b({\cal C})}$ are in recollement situation.

Come back to homotopes. For fixed algebra $A$ denote by ${\rm A - Mod}$ the category of all left $A$-modules.
Consider algebra $A$, element $\Delta \in A$ and homotope $B = \widehat{A}_{\Delta}$.
In the work \cite{BZ} it was shown that if element $\Delta$ satisfy to some natural conditions then abelian categories ${\rm k - Mod}$, ${\rm B - Mod}$ and ${\rm A - Mod}$ are in recollement situation.
Remind the proof of this fact. For this purpose, recall the following exact sequence of $B$-bimodules:
\begin{equation}
\label{eps}
\xymatrix{
0 \ar[r] & B^+ \ar[r] & B \ar[r]^{\epsilon} & k \ar[r] & 0,
}
\end{equation}
where $\epsilon: B \to k$ is augmentation map and $B^+ \cong {}_{\psi_1}A_{\psi_2}$ as $B$-bimodule.
We have natural functors: $\psi_{1*}: {\rm A - Mod} \to {\rm B - Mod}$ and $\psi_{2*}: {\rm A - Mod} \to {\rm B - Mod}$. Also, we have functors: $\psi^!_i: {\rm B - Mod} \to {\rm A - Mod}, i = 1,2$ defined by formulas:
\begin{equation}
\psi^!_1, \psi^!_2: V \mapsto {\rm Hom}_B(A,V),
\end{equation}
where $A$ is endowed with structure of $B$-modules via $\psi_1$ and $\psi_2$ respectively.
Also, there are two functors $\psi^*_i: {\rm B - Mod} \to {\rm A - Mod}, i = 1,2$ defined by:
\begin{equation}
\psi^*_1, \psi^*_2: V \mapsto A \otimes_B V,
\end{equation}
where $A$ is a right $B$-module via $\psi_1$ and $\psi_2$ respectively.
Using \cite{BZ}, we have the natural transformation of functors: $\mu: \psi^!_1 \to \psi^*_2$. $\mu$ is defined as follows. Fix $V \in {\rm B - mod}$. Let $\rho: B \to {\rm End}_k(V)$ be a corresponding representation. In this case $\mu_V$ is defined by:
\begin{equation}
\mu_V: a \otimes v \mapsto \phi_{a \otimes v} \in {\rm Hom}_B(A,V),
\end{equation}
where $\phi_{a \otimes v}(a') = \rho(a' \cdot_A a)v$

Also, we have natural functor: $\epsilon_*: {\rm k - Mod} \to {\rm B - Mod}$ and adjoint functors: $\epsilon^!, \epsilon^*: {\rm B - Mod} \to {\rm k - Mod}$ defined by rules: $\epsilon^!: V \mapsto {\rm Hom}_B(k,V)$ and $\epsilon^*: V \mapsto k \otimes_B V$.

We have the following diagram of functors:
\begin{equation}
\label{recol}
\xymatrix{
{\rm k-mod} \ar[rrr]^{\epsilon_*}
&&& {\rm B -mod} \ar@/_2pc/[lll]_{\epsilon^*} \ar@/^2pc/[lll]^{\epsilon^!} \ar[rrr]^{\psi^!_1 = \psi^*_2}
&&& {\rm A - mod} \ar@/_2pc/[lll]_{{\psi_1}_*} \ar@/^2pc/[lll]^{{\psi_2}_*}
}
\end{equation}

Direct checking shows us that ${\rm Im}\epsilon_* = {\rm Ker}\psi^*_2$. Also, functors ${\psi_1}_*$ and ${\psi_2}_*$ are exact iff $B^+$ is right and left projective $B$-module. One can show that $\psi^!_1 {\psi_1}_* \cong {\rm Id}$ and ${\psi^*_2} {\psi_2}_* \cong {\rm Id}$. Thus, in this case functors ${\psi_1}_*$ and ${\psi_2}_*$ are fully faithful functors.
Adjointness of functors is well-known and thus, recollement situation of (\ref{recol}) is clear.

{\bf Definition.}
Consider unital associative algebra $A$. Fix element $\Delta \in A$, $B = \widehat{A}_{\Delta}$. %Denote by $B$ the augmented homotope $\widehat{A}_{\Delta}$.
We will say that $\Delta$ is {\it well-tempered} iff
\begin{enumerate}
\item{ideal of augmentation $B^+$ is right and left projective $B$-module}
\item{multiplication map $B^+ \otimes_k B^+ \to B^+$ is surjective, i.e. $A \Delta A = A$.}
\end{enumerate}

Note that if $A$ is finite-dimensional then one can deduce second condition from first condition.
\begin{Proposition}
Consider finite-dimensional algebra $A$, fix $\Delta \in A$ and $B = \widehat{A}_{\Delta}$. If $B^+$ is projective right and left $B$-module then multiplication map $B^+ \otimes_k B^{+} \to B^+$ is surjective and hence, $A \Delta A = A$.
\end{Proposition}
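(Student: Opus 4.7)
The plan is to reduce the surjectivity of $B^+ \otimes_k B^+ \to B^+$ to a Krull--Schmidt multiplicity question. Set $k := B/B^+$, the one-dimensional augmentation simple left $B$-module. The cokernel $M := B^+/(B^+\cdot B^+) = A/A\Delta A$ is annihilated by $B^+$ from both sides, so as a left $B$-module it is a direct sum of copies of $k$, and therefore $\dim_k M = \dim_k {\rm Hom}_B(B^+, k)$. It thus suffices to prove ${\rm Hom}_B(B^+, k) = 0$.

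Since $B$ is finite-dimensional, I would fix a complete orthogonal set of primitive idempotents $1 = e_0 + e_1 + \dots + e_s$ in $B$. Because $\epsilon$ is a surjective algebra homomorphism to $k$, each $\epsilon(e_i) \in \{0,1\}$ and $\sum \epsilon(e_i) = 1$, so exactly one of them, say $\epsilon(e_0)$, equals $1$. Then $P_0 := Be_0$ is the projective cover of $k$, while each $P_i := Be_i$ with $i \geq 1$ has simple top $V_i \not\cong k$ and satisfies $P_i \subset B^+$ (since $\epsilon(e_i) = 0$). The standard calculation ${\rm Hom}_B(Be_i, k) = \epsilon(e_i) \cdot k$ shows that for any finitely generated projective $P \cong \bigoplus_i P_i^{a_i}$ one has ${\rm Hom}_B(P, k) = k^{a_0}$. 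Consequently, the task reduces to showing that $P_0$ does not occur among the indecomposable summands of $B^+$. To locate this multiplicity, decompose $B = P_0 \oplus Q$ with $Q := \bigoplus_{i\geq 1} P_i$; since $Q \subset B^+$, we get $B^+ = (B^+ \cap P_0) \oplus Q = {\rm rad}(P_0) \oplus Q$, the last equality using that $P_0$ is local with unique maximal submodule ${\rm rad}(P_0)$.

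The crux is then a dimension count. If $B^+$ is projective as a left $B$-module, so is its direct summand ${\rm rad}(P_0)$; but $\dim_k {\rm rad}(P_0) < \dim_k P_0$, so Krull--Schmidt forbids $P_0$ from appearing among the indecomposable summands of ${\rm rad}(P_0)$. Combined with the fact that $Q$ manifestly contains no $P_0$ summand, this yields $a_0 = 0$, and hence $B^+/(B^+)^2 = 0$, i.e.\ $A\Delta A = A$.

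The only delicate point is the identification $B^+ \cap P_0 = {\rm rad}(P_0)$, which rests on the locality of $P_0$ as an indecomposable projective over the finite-dimensional algebra $B$; the rest is a routine Krull--Schmidt argument. It is worth noting that only left projectivity of $B^+$ enters the proof, so the symmetric hypothesis stated in the proposition is in fact stronger than what the conclusion requires.
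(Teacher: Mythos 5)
Your proof is correct, and its skeleton is the same as the paper's: both use finite-dimensionality of $B$ to decompose the projective module $B^+$ into indecomposable projectives $Be_i$ and reduce surjectivity of multiplication to the statement that the augmentation simple does not pair with $B^+$ (you phrase this as ${\rm Hom}_B(B^+,k)=0$, the paper as $k\otimes_B B^+=0$; these are equivalent here because $B^+/(B^+\cdot B^+)$ is a direct sum of trivial modules). The genuine difference lies at the crucial step. The paper simply asserts that $k\otimes_B Be_i=0$; taken literally for all $i$ this is false, since it fails for the projective cover $P_0$ of the trivial module, so the real content of that assertion is that $P_0$ does not occur as a summand of $B^+$ --- and the paper gives no argument for this. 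You prove exactly that point, via the modular-law decomposition $B^+=(B^+\cap P_0)\oplus\bigoplus_{i\ge 1}Be_i={\rm rad}(P_0)\oplus Q$ together with the Krull--Schmidt dimension count showing that the projective module ${\rm rad}(P_0)$ admits no $P_0$-summand, while $Q$ has none because the tops of the $Be_i$, $i\ge 1$, are not $k$. In exchange, you can dispense with the paper's concluding step (tensoring the augmentation sequence $0\to B^+\to B\to k\to 0$ to identify $B^+\otimes_B B^+\cong B^+$), since you compute $B^+/(B^+\cdot B^+)$ directly through ${\rm Hom}_B(B^+,k)$. Your closing remark that only one-sided projectivity of $B^+$ is needed is accurate and applies equally to the paper's argument; apart from a slight conflation of the complete set of primitive idempotents with the isomorphism classes of indecomposable projectives (harmless here), your write-up is complete and in fact supplies the justification the paper elides.
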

\begin{proof}
It is evident that algebra $B$ is finite-dimensional. Assume that $B$ has only $s$ simple $B$-modules. It is well-known that there are $s$ indecomposable projective $B$-modules $P_i, i = 1,...,s$.
Since $B^+$ is projective finite generated $B$-module, there is the following decomposition: $B^+ = \oplus^s_{i=1} P^{\oplus n_i}_i$ for some $n_i \ge 0$.
Moreover, there are idempotents $e_i, i = 1,...,s$ such that $P_i = B e_i, i = 1,...,s$.

Thus, $k \otimes_B B^+ = \oplus^s_{i=1} (k \otimes_B Be_i)^{\oplus n_i}$. One can show that $k \otimes_B Be_i = 0$ and hence, $k \otimes_B B^+ = 0$.
Tensoring sequence (\ref{eps}) by $B^+$ and using projectivity of $B^+$, we get that $B^+ \otimes_B B^{+} \cong B^+$ as $B$-bimodules.
Further, consider multiplication map $m: B \otimes_k B \to B$. Tensoring it by $B^+$ from left and right side, we get that $B^+ \otimes_k B^+ \to B^+ \otimes_B B^+ = B^+$ is surjective.
\end{proof}

In section \ref{main} we will prove that first and second conditions are equivalent for finite-dimensional algebras.

\begin{Corollary}\cite{BZ}
Let $A$ be a unital commutative algebra. Element $\Delta \in A$ is well-tempered iff $\Delta$ is invertible.
\end{Corollary}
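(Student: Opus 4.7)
The plan is to exploit the commutativity of $A$ to show that condition (2) of the definition of well-tempered collapses directly to the invertibility of $\Delta$, and then verify condition (1) for invertible $\Delta$ by splitting $\widehat{A}_{\Delta}$ as a direct product of algebras.

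First I would handle the forward implication. Suppose $\Delta$ is well-tempered, so $A \Delta A = A$. Since $A$ is unital and commutative, every element of $A \Delta A$ has the form $\sum a_i \Delta b_i = \Delta \sum a_i b_i \in \Delta A$, so $A \Delta A = \Delta A$. The equality $\Delta A = A$ then yields some $a \in A$ with $\Delta a = 1$, and by commutativity $a$ is automatically a two-sided inverse, so $\Delta$ is invertible.

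For the converse I would assume $\Delta$ is invertible and check both well-tempered conditions. Condition (2) is immediate from $1 = \Delta \cdot \Delta^{-1} \in \Delta A \subseteq A \Delta A$. For condition (1), Proposition \ref{invele} supplies an algebra isomorphism $A \cong A_{\Delta}$, so $A_{\Delta}$ is already unital with its own internal unit $e$ (explicitly $e = \Delta^{-1}$). Inside $B = \widehat{A}_{\Delta}$, the element $e$ is an idempotent and $f := 1_B - e$ is an orthogonal idempotent, because $a *_{\Delta} e = e *_{\Delta} a = a$ holds for every $a \in A_{\Delta}$. Hence $B$ splits as a direct product of algebras $A_{\Delta} \times k$ via the complete orthogonal system $\{e,f\}$, and the augmentation ideal $B^+ = A_{\Delta} = Be$ is a direct summand of $B$ as a left (and right) $B$-module, hence projective on both sides.

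I do not expect a substantive obstacle. The whole content of the corollary is that in a unital commutative algebra the principal ideal $\Delta A$ equals $A$ iff $\Delta$ is invertible, while the invertible case reduces the augmented homotope to the transparent product $A \times k$ and makes both well-tempered conditions trivially satisfied. The only bookkeeping that deserves care is verifying that the internal unit of $A_{\Delta}$ remains an idempotent after the external augmentation, which is automatic from the formula $x *_{\Delta} y = x \Delta y$ together with $\Delta \cdot \Delta^{-1} = 1$.
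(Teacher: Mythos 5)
Your proposal is correct and follows essentially the same route as the paper: the forward direction uses commutativity to collapse $A\Delta A$ to the principal ideal $A\Delta$, forcing invertibility, and the converse uses the splitting of $B=\widehat{A}_{\Delta}$ into $A\times k$ via the idempotent $\Delta^{-1}$ (the paper writes this as $B\cong k\oplus A$ with $1=(1-1_A)+1_A$), so that $B^+$ is a projective direct summand on both sides. Your version merely makes the idempotent $e=\Delta^{-1}$ and the orthogonality computation explicit, which is a harmless elaboration of the paper's argument.
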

\begin{proof}
In this case $A \Delta A = A\Delta = A$ and hence, $\Delta$ is invertible. Further, we have the following isomorphism of algebras $B = \widehat{A}_\Delta \cong k \oplus A$ and the following decomposition of unit: $1 = (1-1_A) + 1_A$. Thus, $A$ is projective $B$-module.
\end{proof}

\begin{Corollary}\cite{BZ}
Consider matrix algebra $M_n(k)$. Element $\Delta \in M_n(k)$ is well-tempered iff $\Delta \ne 0$.
\end{Corollary}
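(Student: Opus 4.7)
The plan is to handle the two directions separately. Necessity ($\Delta = 0$ is not well-tempered) is immediate: $A \cdot 0 \cdot A = 0 \ne A$ violates condition (2). For sufficiency, assume $\Delta \ne 0$ and write $B = \widehat{A}_{\Delta}$. Condition (2) is immediate from the simplicity of $M_n(k)$: $A\Delta A$ is a nonzero two-sided ideal, hence equals $A$. It remains to verify condition (1): that $B^+ \cong {}_{\psi_1} A_{\psi_2}$ is projective as a left and as a right $B$-module.

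I would first reduce to the case where $\Delta$ is a diagonal idempotent. By elementary row and column operations (Smith normal form), any matrix of rank $r \ge 1$ factors as $\Delta = u \cdot e \cdot v$ for some $u, v \in GL_n(k)$ and $e := E_{11} + \cdots + E_{rr}$. By Proposition \ref{invele}, $A_{\Delta} \cong A_e$ as algebras, hence $\widehat{A}_{\Delta} \cong \widehat{A}_e$; well-temperedness is preserved, so I may assume $\Delta = e$. Next, I would use the column decomposition $A = \bigoplus_{j=1}^n A E_{jj}$: each summand is a left $B$-submodule via $\psi_1$, since the $B$-action is left $A$-multiplication by $\psi_1(b)$, which commutes with right multiplication by $E_{jj}$. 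For $j \le r$ one has $E_{jj} *_e E_{jj} = E_{jj} e E_{jj} = E_{jj}$, so $E_{jj}$ is idempotent in $B$, and the subset $A E_{jj} \subseteq B$ coincides with the projective left ideal $B E_{jj}$ (the $B$-actions match). For $j > r$, the map
\[
\varphi_j \colon A E_{11} \longrightarrow A E_{jj}, \qquad x E_{11} \longmapsto x E_{1j},
\]
is a $k$-linear bijection, and is $B$-linear by associativity in $A$: $(\psi_1(b) \cdot x E_{11})\cdot E_{1j} = \psi_1(b) \cdot (x E_{11} \cdot E_{1j})$. Hence $A E_{jj} \cong A E_{11} \cong B E_{11}$ is projective. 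Summing, $A$ is left $B$-projective; the right case is symmetric, using the row decomposition $A = \bigoplus_i E_{ii} A$ together with $\psi_2$ and left multiplication by $E_{j1}$ as the intertwiner.

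The only subtle point is the $B$-equivariance of $\varphi_j$ for $j > r$, which relies on the fact that the left $B$-action on $A$ via $\psi_1$ is a left $A$-multiplication (by $\psi_1(b) \in A$), and thus commutes with any right $A$-multiplication by a fixed matrix. Beyond this, everything is an unpacking of definitions. As a sanity check, the commutative corollary proved just above arises from the special case $n = 1$, where the only invertible/nonzero condition coincide.
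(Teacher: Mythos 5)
Your proof is correct and takes essentially the same route as the paper: there this corollary is an instance of the main theorem, whose proof likewise reduces $\Delta$ to an idempotent by the two-sided action of units (Lemma \ref{convform}, which in $M_n(k)$ is exactly your Smith normal form step), decomposes $A$ into the principal modules $Ae^j_i$ over orthogonal idempotents, identifies the summands meeting the idempotent part of $\Delta$ with the projective modules $B *_{\Delta} e^{j_i}_i$, and transports projectivity to the remaining summands via $A$-linear isomorphisms that are automatically $B$-linear through $\psi_1$ (your intertwiner $\varphi_j$). The only difference is that you carry this out directly for matrices instead of invoking the general finite-dimensional theorem together with simplicity of $M_n(k)$.
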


\subsection{Well-tempered elements of finite-dimensional algebras and properties of homotopes.}
\label{main}

Consider finite-dimensional associative algebra $A$. Denote by $R(A)$ the Jacobson radical of $A$. By Maltzev - Wedderburn theorem, we have the following decomposition of algebra $A = R(A) \oplus S$, where $S \cong A/R(A)$ is a semisimple algebra. Of course, $S = \oplus^t_{i=1}M_{n_i}(k)$.
Denote by $U(A)$, $U(R)$ and $GL(S)$ the group of units of $A$, the subgroup of $U(A)$ consisting of elements $1 + r, r \in R$ and the product $\times^t_{i=1}GL_{n_i}(k)$ respectively.
We have the following trivial proposition:
\begin{Proposition}
U(R) is a normal subgroup of $U(A)$. Group $U(A)$ is a semi-direct product of $GL(S)$ and $U(R)$, i.e. $U(R)$ is an unipotent radical of $U(A)$.
\end{Proposition}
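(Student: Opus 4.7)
The plan is to verify the three claims in turn, relying on the Wedderburn--Malcev decomposition $A = R \oplus S$ (so $S$ is a unital subalgebra of $A$ mapping isomorphically onto $A/R$) and on the fact that $R$ is a nilpotent two-sided ideal.

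First I would check that $U(R)$ is a subgroup. Since $R$ is nilpotent, for every $r \in R$ the finite sum $1 - r + r^2 - \ldots \pm r^{N-1}$ is a genuine two-sided inverse of $1+r$ (with $R^N = 0$), so $U(R) \subset U(A)$. Closure under multiplication is immediate from $(1+r_1)(1+r_2) = 1 + (r_1 + r_2 + r_1 r_2)$ with $r_1 + r_2 + r_1 r_2 \in R$. Normality then follows from the identity $u(1+r)u^{-1} = 1 + uru^{-1}$ together with the fact that $R$ is a two-sided ideal, so that $uru^{-1} \in R$ for any $u \in U(A)$.

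Next, for the semidirect product structure, I would consider the canonical projection $\pi : A \to A/R \cong S$ coming from the decomposition $A = R \oplus S$. Being an algebra homomorphism sending $1_A$ to $1_S$, it restricts to a group homomorphism $\pi : U(A) \to GL(S)$, whose kernel is exactly $U(R)$ (an element of $U(A)$ of the form $1 + r$, $r \in R$, is precisely one projecting to $1_S$, and conversely $s + r \in U(A)$ with $s = 1_S$ forces the element to lie in $U(R)$). The inclusion $S \hookrightarrow A$, being a unital algebra embedding, provides a group-theoretic section $GL(S) \hookrightarrow U(A)$ of $\pi$, so the short exact sequence splits. From $S \cap R = 0$ one reads off $GL(S) \cap U(R) = \{1\}$, and any $a \in U(A)$ factors as $a = s \cdot (1 + s^{-1}r')$, where $s = \pi(a) \in GL(S)$ and $r' = a - s \in R$; here $s^{-1} r' \in R$ because $R$ is a right ideal. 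This gives $U(A) = GL(S) \ltimes U(R)$ and identifies $U(R)$ as the unipotent radical, since each $1+r$ is unipotent by nilpotence of $R$.

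I do not foresee a genuine obstacle: the content of the statement is essentially a group-theoretic translation of the Wedderburn--Malcev splitting $A = S \oplus R$ quoted before the proposition. The only point requiring mild care is invoking Wedderburn--Malcev in the correct form, namely that the complement $S$ can be chosen to be a unital subalgebra (not merely a vector-space complement); without this one would not have the section $GL(S) \hookrightarrow U(A)$ needed to split the exact sequence $1 \to U(R) \to U(A) \to GL(S) \to 1$.
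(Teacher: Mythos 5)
Your proof is correct; the paper in fact states this proposition without any proof (calling it trivial), and your write-up supplies exactly the standard verification: inverses of $1+r$ via the finite geometric series, normality of $U(R)$ from $u(1+r)u^{-1}=1+uru^{-1}$ with $R$ two-sided, and the split exact sequence $1\to U(R)\to U(A)\to GL(S)\to 1$ coming from the Wedderburn--Malcev subalgebra complement $S$. One immaterial slip: $s^{-1}r'\in R$ because $R$ is closed under \emph{left} multiplication by $A$ (it is two-sided anyway), not because it is a right ideal; and your emphasis that $S$ must be a unital subalgebra complement (not just a linear one) is indeed the only point of substance.
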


Using proposition \ref{invele}, we obtain that if $\Delta_1, \Delta_2$ are in the same double coset $U(A) \backslash A / U(A)$ then $A_{\Delta_1} \cong A_{\Delta_2}$ and $\widehat{A}_{\Delta_1} \cong \widehat{A}_{\Delta_2}$. It is clear that if $\Delta_i, i = 1,2$ are in the same double coset $U(A) \backslash A / U(A)$ then $\Delta_1$ is well-tempered iff $\Delta_2$ is so.
Study suitable view of elements in a double coset $U(A) \backslash A / U(A)$.
%For any element $\Delta \in A$ we have t decomposition $\Delta = s + r, s \in S, r \in R(A)$. %There is a decomposition:
%\begin{equation}
%%\label{decdel}
%\Delta = s + r, s \in S, r \in R(A).
%\end{equation}

%The following technical lemma shows us that $s$ and $r$ from (\ref{decdel}) could be picked up in the following suitable manner:
\begin{Lemma}
\label{convform}
In any double coset $U(A)\backslash A/U(A)$ there is an element $x = s + r$, where $s$ and $r$ satisfy to relations:
\begin{itemize}
    \item{$s^2 = s$,}
    \item{$sr = rs = 0$.}
\end{itemize}
\end{Lemma}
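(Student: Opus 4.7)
The plan is to use the Wedderburn decomposition $A = R \oplus S$ together with left and right multiplication by units in $U(A)$ to bring $\Delta$ into the desired form in three stages.

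First, I would reduce the semisimple part to an idempotent. Write $\Delta = \sigma + \rho$ with $\sigma \in S$ and $\rho \in R$. Since $S \cong \bigoplus_i M_{n_i}(k)$ is a product of matrix algebras, ordinary row and column reductions within each block produce $g_1, g_2 \in GL(S)$ with $g_1 \sigma g_2 = e$, an idempotent of $S$ (the block-diagonal form with identities and zeros on the diagonal). Because $R$ is a two-sided ideal, $g_1 \rho g_2 \in R$ as well, so I may assume from the start that $\Delta = e + \rho$ with $e \in S$ idempotent and $\rho \in R$.

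Next, I would Peirce-decompose with respect to $e$ and $f := 1 - e$ and kill the unwanted radical pieces. Write $\rho = a + b + c + d$ according to $A = eAe \oplus eAf \oplus fAe \oplus fAf$, so that $a \in eRe$, $b \in eRf$, $c \in fRe$, $d \in fRf$. Since $a$ is nilpotent, $e + a$ is a unit in the corner ring $eAe$; let $\beta \in eAe$ denote its inverse. I then multiply successively: on the left by $u_1 := 1 - c\beta$, on the right by $v_1 := 1 - \beta b$, and finally on the right by $v_2 := \beta + f$ (whose two-sided inverse in $A$ is $(e + a) + f$). All three factors lie in $U(A)$, since $c\beta$, $\beta b$, and $\beta - e$ all belong to $R$. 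Using the identities $\beta (e+a) = (e+a)\beta = e$, the orthogonality $ef = fe = 0$, and the resulting vanishings $(fAe)(fAe) = 0$, $b\beta = 0$, $d\beta = 0$, one checks step by step that $u_1 \Delta$ has no $fAe$-component, $u_1 \Delta v_1$ additionally has no $eAf$-component, and $u_1 \Delta v_1 v_2$ further reduces the $eAe$-part from $e + a$ back down to $e$.

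The final element $x := u_1 \Delta v_1 v_2 = e + d^{(1)}$ for some $d^{(1)} \in fRf$ is of the claimed form: setting $s := e$ and $r := d^{(1)}$ gives $s^2 = s$, and $d^{(1)} \in fAf = (1-e)A(1-e)$ yields $sr = e\cdot d^{(1)} = 0$ and $rs = d^{(1)} \cdot e = 0$.

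The main obstacle is bookkeeping: each of the three multiplications must clean its target Peirce block without reintroducing terms into blocks already killed. This dictates the order left--right--right. Killing $c$ first from the left works because the $u_1$-correction only spills into the $fAf$-block; then killing $b$ from the right does not revive $c$, thanks to $b\beta = d^{(1)}\beta = 0$; and only once $b$ has been cleared may one apply $v_2$ to absorb $a$, since otherwise $v_2$ would produce a new $eAf$-contribution through $b \cdot (\beta - e)$.
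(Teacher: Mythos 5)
Your proof is correct and follows essentially the same strategy as the paper's: first use $GL(S)\subset U(A)$ (via the Wedderburn--Malcev decomposition $A=R\oplus S$) to replace the semisimple part by an idempotent, then remove the offending radical components by multiplying with unipotent units $1+(\text{radical element})$. The only difference is bookkeeping: you carry out an explicit Peirce-block elimination with the three units $1-c\beta$, $1-\beta b$, $\beta+f$, whereas the paper compresses the same cleanup into the two factorizations $x_2(1+r_2)^{-1}$ and $(1+r_3)^{-1}x_3$; all your block computations ($\beta c=b\beta=d\beta=0$, etc.) check out.
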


%For any element $\Delta \in A$ there are elements $g_1,g_2 \in U(A)$ such that $g_1 \Delta g_2 = s' + r', s' \in S, r' \in R$, where $s'$ and $r'$ satisfy to relations:
%\begin{itemize}
%    \item{$s'^2 = s'$,}
%    \item{$s'r' = r's' = 0$.}
%\end{itemize}
%\end{Lemma}
\begin{proof}
Consider element $x_1 = s_1 + r_1, s \in S, r \in R$. First statement is easy. Actually, there are elements $h_1, h_2 \in GL(S)$ such that $h_1 s_1 h_2 = s$, where $s^2 = s$. $x_2 = h_1 x_1 h_2 = s + r_1, r_2 = h_1r_1h_2$.
It is evident that action of $U(R)$ on $S$ is trivial. Using action of $U(R)$, one can show that element $r_1$ can be transformed into the element $r$ satisfying to second condition of lemma.
Actually, $r_2 = sr_2 + (1-s)r_2$ and $x_2 = s(1+r_2) + (1-s)r_2$. Direct calculations show us that $x_3 = x_2(1+r_2)^{-1} = s + (1-s)r_2(1+r_2)^{-1}$.
%Further, we can change $f_1, f_2 \in U(R)$ such that
Denote by $r_3$ the element $(1-s)r_2(1+r_2)^{-1}$. It is easy that $sr_3 = 0$.
Analogously, $x_3 = (1+r_3)s + r_3(1-s)$. Consider element $x = (1+r_3)^{-1}x_3 = s + (1+r_3)^{-1}r_3(1-s)$. Since $r_3$ and $(1+r_3)^{-1}$ commute we get that
$x = s + r_3(1+r_3)^{-1}(1-s)$. Denote by $r$ the element $r_3(1+r_3)^{-1}(1-s)$. Thus, $sr = rs = 0$.

%We have the following identity for $r_1$: $r' = s'r'+(1-s')r'$. Thus, $\Delta' = s'(1+r') + (1-s')r'$. Consider $\Delta' (1+r')^{-1} = s' + (1-s')r'(1+r')^{-1}$. Denote by $\Delta''$ and $r''$ the elements $\Delta'(1+r)^{-1}$ and $(1-s')r'(1+r')^{-1}$ respectively. In this case $\Delta'' = s' + r''$ and $s' r'' = 0$. Analogously, consider decomposition $r'' = r''s'+ r'' (1-s')$. Moreover, $\Delta'' = (1+r'')s' + r'' (1-s')$ and $(1+r'')^{-1}\Delta'' = s' + (1+r'')^{-1}r''(1-s') = s' + r''(1+r'')^{-1}(1-s')$. Denote by $r'''$ the element $r''(1+r'')^{-1}(1-s')$. Thus, $\Delta''' = (1+r'')^{-1} \Delta' = s' + r'''$, where $s'^2 = s'$ and $s' r''' = r''' s' = 0$.
\end{proof}
We will say that $\Delta$ is {\it suitable} if decomposition $\Delta = s+r$ satisfy to lemma \ref{convform}. Make some useful remark on suitable $\Delta \in A$.
We have irreducible representations $\varrho_i, i = 1,...,t$ of $A$. Thus, we have the decomposition of
\begin{equation}
\label{decs}
s = \sum^t_{i=1} s_i,
\end{equation}
where $s_i = \varrho(\Delta) \in M_{n_i}(k)$ and $s_i s_j = 0$ for any $i \ne j$. If $\Delta$ is suitable then $s^2_i = s_i, i = 1,...,t$. Also, we have the decomposition of unity of $A$ of the following type: $1 = \sum^t_{i=1}e_i$, where $e_i, i = 1,...,t$ are identity elements of $M_{n_i}(k)$. Pickup the decomposition of $e_i$ into sum of orthogonal idempotents: $e_i = \sum^{n_i}_{j=1}e^j_i$ such that
\begin{equation}
\label{decsi}
s_i = \sum_{j \in I_i}e^j_i
\end{equation}
for $I_i \subseteq \{1,...,n_i\}$. It is easy that ${\rm rank}\varrho_i(s) = |I_i|$ and $s_i e^j_i = e^j_i s_i = e^j_i$ for any $i$ and $j \in I_i$.
%Denote by

\begin{Theorem}
Consider finite-dimensional algebra $A$ and element $\Delta$ such that $A \Delta A = A$ then $\Delta$ is well-tempered element of $A$.
\end{Theorem}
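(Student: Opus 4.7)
My plan is to prove the theorem directly, by exhibiting a dual basis for $B^+$ as a left and as a right $B$-module. This bypasses Lemma~\ref{convform} and the analysis of suitable elements entirely, and in fact works for associative algebras of arbitrary dimension: only the relation $A\Delta A = A$ is used. Since $1_A \in A\Delta A$, I fix once and for all a finite expression $1_A = \sum_{i=1}^n u_i \Delta v_i$ with $u_i,v_i \in A$, and use these elements as the building blocks of the dual basis.

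Recall that the left $B$-action on $B^+\cong{}_{\psi_1}A_{\psi_2}$ is $1_B\cdot v = v$ and $a\cdot v = a\Delta v$ for $a\in A$. For each $i$ let $\phi_i: B^+\to B$ be right multiplication in $A$ by $u_i$, namely $\phi_i(v) = vu_i \in A\subset B$, and set $x_i = v_i \in B^+$. Associativity of $A$ makes each $\phi_i$ left $B$-linear:
\[
\phi_i(a*_\Delta v) = (a\Delta v)u_i = a\Delta(vu_i) = a*_\Delta \phi_i(v).
\]
The dual basis identity is then immediate:
\[
\sum_{i=1}^n \phi_i(v)*_\Delta x_i \;=\; \sum_{i=1}^n(vu_i)\Delta v_i \;=\; v\Bigl(\sum_{i=1}^n u_i\Delta v_i\Bigr) \;=\; v\qquad\text{for every } v\in B^+.
\]
Hence $B^+$ is a finitely generated projective left $B$-module. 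The right-module case is symmetric: using the same decomposition, $\psi_i(v) = v_i v$ defines a right $B$-linear map $B^+\to B$ and $y_i = u_i$ satisfies $\sum_i y_i *_\Delta \psi_i(v) = (\sum_i u_i\Delta v_i)v = v$, giving projectivity on the right.

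Since the second clause in the definition of well-tempered is the hypothesis $A\Delta A = A$ itself, and the first clause is precisely the projectivity just established, the theorem follows. The only conceptual point—and the one that does all the work—is that $A\Delta A = A$ is stronger than a mere span condition: it produces a finite decomposition of $1_A$ in which a $\Delta$ already sits between consecutive $A$-factors, precisely the slot that the $*_\Delta$-multiplication inserts automatically, so the $u_i$ and $v_i$ dovetail into a dual basis without any further manipulation. I do not foresee any genuine obstacle.
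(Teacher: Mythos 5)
Your proof is correct, and it takes a genuinely different route from the paper's. Your two key observations are sound: the left $B$-linearity of $v\mapsto vu_i$ is exactly the identity $(b *_{\Delta} a_1)a_2 = b *_{\Delta}(a_1a_2)$ of (\ref{assoc}), and the dual-basis identity follows from a fixed finite decomposition $1_A=\sum_i u_i\Delta v_i$, which exists precisely because $A$ is unital and $A\Delta A=A$; the symmetric construction handles the right module structure via $\psi_2$. The paper instead argues through structure theory: it reduces $\Delta$ to a suitable representative $s+r$ of its double coset $U(A)\backslash A/U(A)$ (lemma \ref{convform}), invokes the Wedderburn--Malcev decomposition and a decomposition of the semisimple part $s$ into orthogonal primitive idempotents $e^j_i$, notes that $A\Delta A=A$ is equivalent to every block contributing at least one idempotent with $e^{j_i}_i *_{\Delta} e^{j_i}_i=e^{j_i}_i$, and then identifies $B^+$ with $\oplus^t_{i=1}(B *_{\Delta} e^{j_i}_i)^{\oplus n_i}$, a direct sum of projectives generated by idempotents. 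Your dual-basis argument is shorter, avoids Lemma \ref{convform} and the Wedderburn--Malcev theorem entirely, and proves more: finitely generated projectivity of $B^+$ on both sides holds for any unital associative algebra satisfying $A\Delta A=A$, so finite-dimensionality is only needed for the converse implication (projectivity implies $A\Delta A=A$) established in the proposition preceding the theorem. What the paper's route buys in exchange is explicit structural information --- a decomposition of $B^+$ into projective modules attached to idempotents compatible with the block structure of $A$ --- which is in the spirit of its subsequent analysis of the radical and the irreducible representations of $B$. One cosmetic remark: your maps in the right-module case are denoted $\psi_i$, which collides with the paper's homomorphisms $\psi_1,\psi_2: B\to A$; rename them to avoid confusion.
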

\begin{proof}
Consider homotope $B = \widehat{A}_{\Delta}$. We have to prove that augmentation ideal $B^+$ is right and left projective $B$-module.
Let us prove that $B^+ \cong {}_{\psi_1}A$ is left projective $B$-module.
Using lemma \ref{convform}, we can consider only suitable element $\Delta$.
Consider decomposition of $\Delta = s + r$, where (\ref{decs}) is a decomposition of $s$ and (\ref{decsi}) is a decomposition of any $s_i$ in (\ref{decs}).
Condition $A \Delta A = A$ is equivalent to $I_i \ne \emptyset$ for any $i = 1,...,t$ in (\ref{decsi}).
It is well-known the following isomorphisms of $A$-modules: $A = \oplus^t_{i=1}Ae_i$, $Ae_i = \oplus^{n_i}_{j=1}A e^j_i, i = 1,...,t$ and $Ae^j_i \cong Ae^l_i$ for any $i = 1,...,t$ and $j,l \in \{1,...,n_i\}$.
For any $i \in \{1,...,t\}$ pickup $j_i \in I_{i}$. Consider elements $e^{j_i}_{i}, i \in \{1,...,t\}, j_i \in I_i$. In this case $e^{j_i}_{i} *_{\Delta} e^{j_i}_{i} = e^{j_i}_{i}$ and, hence, $B *_{\Delta} e^{j_i}_{i}$ is projective $B$-module for any $i \in \{1,...,t\}$ and $j_i \in I_i$. Direct checking shows that $B *_{\Delta} e^{j_i}_{i} \cong Ae^{j_i}_{i}$ as $B$-modules. Thus, $Ae_i \cong (B *_{\Delta} e^{j_i}_i)^{\oplus n_i}$ and $A = \oplus^t_{i=1} (B *_{\Delta} e^{j_i}_i)^{\oplus n_i}$.
\end{proof}

If $\Delta$ is well-tempered element of $A$ then we have the following inequality for global dimension: ${\rm gl.dim}(B) \le \max(2,{\rm gl.dim}(A))$ (cf. \cite{BZ}).
\begin{Corollary}
If $\Delta$ is not well-tempered element of finite-dimensional algebra $A$ then algebra $B = \widehat{A}_{\Delta}$ has infinite global dimension.
\end{Corollary}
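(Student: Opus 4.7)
The plan is to show that for non-well-tempered $\Delta$ the trivial $B$-module $k$ has infinite projective dimension over $B$, giving $\operatorname{gl.dim}(B) = \infty$. By the theorem just proved (and the preceding remark for finite-dimensional $A$), non-well-temperedness is equivalent to $A\Delta A \subsetneq A$. Set $I := A\Delta A$; this is a proper two-sided ideal of $A$, and also a two-sided ideal of $B = \widehat{A}_{\Delta}$, since for $a_1,a_2 \in B^+ = A$ one has $a_1 a_2 = a_1\Delta a_2 \in I$ in $B$. By Proposition~\ref{commhom}, the quotient $\overline B := B/I$ is isomorphic to $\widehat{A/I}_{\overline\Delta}$; since $\Delta \in I$ we have $\overline\Delta = 0$, so $\overline B \cong \widehat{\overline A}_0 = k \oplus \overline A$, where $\overline A := A/I \neq 0$ is a square-zero ideal of $\overline B$.

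Next I would analyze $\overline B$. Since $\overline A^2 = 0$ and $\overline A \neq 0$, the algebra $\overline B$ is local (a direct check in $k \oplus \overline A$ shows its only idempotents are $0$ and $1$), with Jacobson radical $\overline A$ and unique simple module $k$. The first syzygy satisfies $\Omega_{\overline B}(k) \cong \overline A$, which as a $\overline B$-module is a direct sum of $\dim \overline A$ copies of the trivial module (because $\overline A$ annihilates itself). Iterating gives $\Omega_{\overline B}^n(k) \cong k^{(\dim \overline A)^n} \neq 0$ for every $n$, so $\operatorname{pd}_{\overline B}(k) = \infty$ and $\operatorname{Ext}^n_{\overline B}(k, k) \neq 0$ for all $n$.

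The crucial step is transferring this to $B$. Here one uses the structural identity $I = (B^+)^2$: any extension $0 \to k \to M \to k \to 0$ of $B$-modules satisfies $IM \subseteq B^+(B^+ M) \subseteq B^+ \cdot k = 0$, so $M$ automatically descends to a $\overline B$-module, giving $\operatorname{Ext}^1_B(k, k) = \operatorname{Ext}^1_{\overline B}(k, k) = \overline A^*$. For higher degrees I would run the change-of-rings spectral sequence
\[
E_2^{p,q} = \operatorname{Ext}^p_{\overline B}\!\bigl(\operatorname{Tor}^B_q(k,\overline B),\, k\bigr) \Longrightarrow \operatorname{Ext}^{p+q}_B(k, k),
\]
whose bottom row is $E_2^{n,0} = \operatorname{Ext}^n_{\overline B}(k, k)$, and show that enough of these non-zero classes survive to $E_\infty$. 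Equivalently, I would filter the bar complex of $B$ by powers of $I$ so that the associated graded recovers the bar complex of $\overline B$, and extract non-vanishing cycles in arbitrarily high degree.

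The main obstacle is precisely this last step. Finite global dimension is not preserved under quotients by two-sided ideals in general, so one cannot softly deduce $\operatorname{gl.dim}(B) = \infty$ from $\operatorname{gl.dim}(\overline B) = \infty$. What makes the argument go through in the present situation is the rigidity of the identity $I = (B^+)^2$: it forces the low-degree $\operatorname{Ext}^*_B(k,k)$ to coincide with $\operatorname{Ext}^*_{\overline B}(k,k)$, and by Yoneda-product and spectral-sequence reasoning propagates the infinite family of non-zero $\overline B$-classes up into $B$.
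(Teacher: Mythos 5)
Your first half is sound and in fact reproduces the paper's key computation in different clothing: with $I=A\Delta A=(B^+)^2$ you show every self-extension of the trivial module is killed by $I$, hence $\operatorname{Ext}^1_B(k,k)\cong\operatorname{Ext}^1_{\overline B}(k,k)\cong (A/A\Delta A)^*\neq 0$; the paper gets the same conclusion from the sequence $0\to B^+\to B\to k\to 0$, which gives $\operatorname{Ext}^1_B(k,k)\cong\operatorname{Hom}_B(B^+,k)\neq 0$ because $B^+$ modulo the image of the multiplication map (your $I$) is a nonzero sum of trivial modules. The use of the main theorem to identify ``not well-tempered'' with $A\Delta A\subsetneq A$ is also correct.

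The genuine gap is the step you yourself flag as the main obstacle. From $\operatorname{Ext}^1_B(k,k)\neq 0$ you try to reach infinite global dimension by proving $\operatorname{Ext}^n_B(k,k)\neq 0$ for all (or infinitely many) $n$, transferring the non-vanishing from $\overline B$ via the change-of-rings spectral sequence; but you never show the bottom-row classes $E_2^{n,0}=\operatorname{Ext}^n_{\overline B}(k,k)$ survive, and they need not: they can be hit by differentials from the higher rows $\operatorname{Ext}^p_{\overline B}(\operatorname{Tor}^B_q(k,\overline B),k)$, and, as you note, infinite global dimension of a quotient says nothing by itself. Worse, the implication ``$\operatorname{Ext}^1_B(S,S)\neq 0$ for a simple $S$ implies $\operatorname{Ext}^n_B(S,S)\neq 0$ for infinitely many $n$'' is essentially the extension conjecture, which is not available as a general tool, so this bridge is not a routine verification one could leave to the reader. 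The paper closes the argument differently and much more cheaply: it invokes Igusa's no-loop theorem \cite{Igu}, which states that a finite-dimensional algebra of finite global dimension has $\operatorname{Ext}^1(S,S)=0$ for every simple module $S$; combined with $\operatorname{Ext}^1_B(k,k)\neq 0$ this immediately forces $\operatorname{gl.dim}B=\infty$. Your computation up to and including $\operatorname{Ext}^1_B(k,k)\neq 0$ can be kept verbatim; replace the spectral-sequence plan by this citation and the proof is complete.
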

\begin{proof}
If $\Delta$ is not well-tempered element then multiplication map $B^+ \oplus_k B^+ \to B^+$ is not surjective. Denote by $M \subset B^+$ the image of $m$.
It is easy that $M$ is $B$-module and quotient $B^+/M$ is a direct sum of several copies of trivial $B$-modules.
Further, recall the famous result of Igusa \cite{Igu}: if $B$ is finite-dimensional algebra of finite global dimension then ${\rm Ext}^1_B(V,V) = 0$ for any simple $b$-module.
Let us prove that ${\rm Ext}^1_B(k,k) \ne 0$. Using exact sequence (\ref{eps}), we get that ${\rm Ext}^1_B(k,k) = {\rm Hom}_B(B^+,k)$. It is clear that if $\Delta$ is not well-tempered element then $B^+/M \ne 0$ and hence, $0 \ne {\rm Hom}_B(B^+/M,k) \to {\rm Hom}_B(B^+,k)$ is injective. Thus, ${\rm Hom}_B(B^+,k) \ne 0$ and we get the required statement.
\end{proof}

Further, study some properties of radical and representations of homotopes.
The following proposition describes the connection between radicals of algebra $A$ and augmented homotope $\widehat{A}_{\Delta}$:

\begin{Proposition}
\label{radhom}
Let $A$ be a finite-dimensional associative algebra. Fix $\Delta \in A$. %Denote by $B$ the homotope $\widehat{A}_{\Delta}$.
Then
\begin{itemize}
%\item{Any two-sided ideal of $A$ is a two-sided ideal of $B = \widehat{A}_{\Delta}$.}
\item{$R(A) \subseteq R(B)$, and}
\item{$R(A) = R(B)$ iff $\Delta$ is invertible element of $A$.}
\end{itemize}
\end{Proposition}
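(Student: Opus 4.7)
The plan has two parts, corresponding to the two bullets in the statement.

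\textbf{First bullet.} The plan is to show $R(A)\subseteq R(B)$ by exhibiting $R(A)$ as a nilpotent two-sided ideal of $B$. Since $B=k\cdot 1\oplus A$ as a vector space and $B^{+}=A$, I regard $R(A)$ as a subspace of $B$ in the obvious way. For $a\in R(A)$ and $b=(c,a')\in B$ one computes $b*_{\Delta}a = ca+a'\Delta a$ and $a*_{\Delta}b = ca+a\Delta a'$; since $R(A)$ is a two-sided ideal of $A$, both lie in $R(A)$. So $R(A)$ is a two-sided ideal of $B$. For nilpotency, observe that an $n$-fold $*_{\Delta}$-product has the form
\[
a_{1}*_{\Delta}a_{2}*_{\Delta}\cdots*_{\Delta}a_{n} \;=\; a_{1}\,\Delta a_{2}\,\Delta a_{3}\cdots\Delta a_{n}\ \in\ R(A)^{n},
\]
using that $\Delta a_{i}\in R(A)$. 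Because $R(A)$ is nilpotent in the finite-dimensional algebra $A$, this vanishes for $n$ large, so $R(A)$ is a nilpotent two-sided ideal of the finite-dimensional algebra $B$ and hence sits inside $R(B)$.

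\textbf{Second bullet, ``if'' direction.} Assume $\Delta\in A$ is invertible. Then $A_{\Delta}$ is unital with unit $\Delta^{-1}$, and the map $a\mapsto\Delta^{-1}a$ is an isomorphism of unital algebras $A\xrightarrow{\sim}A_{\Delta}$. Inside $B=\widehat{A}_{\Delta}$ the elements $e_{1}=1_{B}-\Delta^{-1}$ and $e_{2}=\Delta^{-1}$ are central orthogonal idempotents summing to $1_{B}$, which yields an algebra isomorphism $B\cong k\times A_{\Delta}\cong k\times A$ under which $B^{+}=A$ corresponds to $\{0\}\times A$. Taking radicals gives $R(B)\cong 0\times R(A)$, matching $R(A)\subseteq B^{+}$ under the identification.

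\textbf{Second bullet, ``only if'' direction.} This is the main obstacle. Assume $R(A)=R(B)$, so the quotient $\widetilde{B}:=B/R(A)=B/R(B)$ is semisimple. By Proposition~\ref{commhom} applied to the ideal $R(A)$ and the augmented homotope construction, $\widetilde{B}$ is canonically isomorphic to $\widehat{\widetilde{A}}_{\widetilde{\Delta}}$, where $\widetilde{A}=A/R(A)=\bigoplus_{i=1}^{t}M_{n_{i}}(k)$ and $\widetilde{\Delta}$ is the image of $\Delta$. It suffices to show that $\widehat{\widetilde{A}}_{\widetilde{\Delta}}$ is semisimple only when $\widetilde{\Delta}$ is invertible in $\widetilde{A}$, since invertibility of $\widetilde{\Delta}$ modulo $R(A)$ lifts to invertibility of $\Delta$ in $A$ (standard nilpotent lifting in a finite-dimensional algebra). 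For the key implication, suppose $\widetilde{\Delta}$ is not invertible; then in some factor $M_{n_{i}}(k)$ the component of $\widetilde{\Delta}$ is singular, so
\[
L\;:=\;\bigl\{\,a\in\widetilde{A}\ :\ \widetilde{\Delta}\,a=0\,\bigr\}
\]
is nonzero. A direct check shows $L$ is a two-sided ideal of $\widetilde{B}$: for $b=(c,b')\in\widetilde{B}$ and $a\in L$, $\widetilde{\Delta}(ca+b'\widetilde{\Delta}a)=0$ trivially, while $\widetilde{\Delta}(ca+a\widetilde{\Delta}b')=c\widetilde{\Delta}a+(\widetilde{\Delta}a)\widetilde{\Delta}b'=0$. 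Moreover $L *_{\widetilde{\Delta}} L=0$ because $a*_{\widetilde{\Delta}}a' = a\widetilde{\Delta}a'=0$ for $a,a'\in L$. Thus $L$ is a nonzero nilpotent two-sided ideal of $\widetilde{B}$, contradicting semisimplicity. Hence $\widetilde{\Delta}$ must be invertible, and we are done.
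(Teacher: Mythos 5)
Your proof is correct and follows essentially the same route as the paper: both bullets are handled by exhibiting nilpotent two-sided ideals of $B$, the ``if'' direction by the splitting $B\cong k\times A$, and the converse by exploiting an annihilator of the semisimple part of $\Delta$. The only cosmetic difference is that you first pass to $B/R(A)\cong \widehat{\widetilde{A}}_{\widetilde{\Delta}}$ via Proposition~\ref{commhom} and use the full annihilator ideal $L$ there, whereas the paper stays inside $B$ and adjoins to $R(A)$ a single element $s_1\in S$ with $ss_1=s_1s=0$, obtaining a nilpotent ideal strictly containing $R(A)$.
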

\begin{proof}
%First statement is trivial.
Direct calculations show us that $R(A)$ as two-sided $B$-ideal is nilpotent. Thus, $R(A) \subseteq R(B)$.
If $\Delta \in A$ is invertible then $B \cong A \oplus k$. Thus, if $\Delta$ is invertible then $R(A) = R(B)$.

%Recall that by Maltzev - Wedderburn theorem, we have the decomposition of $A$ into direct sum:
%$A = R(A) \oplus S$, where $S$ is a subalgebra of $A$ such that $S \cong A/R(A)$.
%Consider element $\Delta \in A$. In this case we have the following decomposition: $\Delta = s + r, s \in S, r \in R(A)$.
%Using proposition \ref{invele} and lemma \ref{convform}, we can assume that $\Delta = s + r$ is suitable.
Fix $\Delta = s+r, s \in s, r \in R(A)$.
Note that if $\Delta \in A$ is a zero divisor, then $s$ is a zero divisor of $S$.
In this case there is an element $s_1 \in S$ such that $s s_1 = s_1 s = 0$. Consider subspace $J \subset B$ generated by $R(A)$ and $s_1$.
Direct calculations show us that $B *_{\Delta} J \subseteq J$ and $J *_{\Delta} B \subseteq J$ and $J*_{\Delta}J \subseteq R(A)$.

%Fix $j = r_1 + \alpha s_1 \in J, \alpha \in k$.
%It is easy that $x *_{\Delta} j = x \Delta (r' + \alpha \cdot s_1) = x (s' + r) (r_1 + \alpha s_1) = x s'r_1 + xrr_1 + \alpha x r s_1 \in R(A)$ for any $x \in A$. It is clear that $B *_{\Delta} J \subseteq J$. Analogously, $J *_{\Delta} B \subseteq J$. Thus, $J$ is a two-sided ideal of $B$.

%Let us prove that $J$ is nilpotent.
%It easy that $s_1 *_{\Delta} s_1 = s_1 r s_1 \in R(A)$ and $s_1 *_{\Delta} x = s_1 r x \in R(A)$, $x *_{\Delta} s_1 = x r s_1 \in R(A)$ for any $x \in R(A)$. Thus $J^2 \subseteq R(A)$, and hence, $J$ is nilpotent and $J \subseteq R(B)$.
\end{proof}

Further, consider irreducible representations of homotopes. It is clear that algebra $B$ has trivial representation given by augmentation.

Consider element $\Delta \in A$. There are $t$ functions ${\rm rank}_i: A \to {\mathbb N}_0, i = 1,...,t$ defined by rule: ${\rm rank}_i(a): = {\rm rank}\varrho_i(a), i = 1,...,t$, where $\varrho_i, i = 1,...,t$ are irreducible representations of $A$.
It is clear that if $\Delta_1$ and $\Delta_2$ are in the same double coset $U(A) \backslash A / U(A)$, then ${\rm rank}_i(\Delta_1) = {\rm rank}_i(\Delta_2), i = 1,...,t$.
%For simplicity, denote by $r_i, i = 1,...,t$ the values ${\rm rank}_i(\Delta)$.
Denote by $I(\Delta)$ the subset of $\{1,...,t\}$ consisting of indexes $i$ such that ${\rm rank}_i(\Delta) > 0$.

\begin{Corollary}
Let $A$ be a finite-dimensional algebra. Fix $\Delta \in A$. Assume that ${\rm rank}_i(\Delta) = r_i, i = 1,...,t$. Consider augmented homotope $B = \widehat{A}_{\Delta}$. There is a bijection between the set of irreducible representations of $B$ and $I(\Delta) \cup \{\epsilon\}$, where $\epsilon$ is a trivial representation. Dimensions of irreducible representations are $r_i$ for $i \in I(\Delta)$ and $1$ for $\epsilon$. %and ${\rm dim}_k \epsilon = 1$ for trivial.
\end{Corollary}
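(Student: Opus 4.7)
Plan: The aim is to compute $B/R(B)$ explicitly as $k \times \prod_{i \in I(\Delta)} M_{r_i}(k)$; the bijection of irreducible representations with $I(\Delta) \cup \{\epsilon\}$ and the stated dimensions will then be immediate.

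First I would reduce $\Delta$ to a convenient form. Lemma \ref{convform} together with Proposition \ref{invele} lets me assume $\Delta$ is suitable, i.e., $\Delta = s + r$ with $s^2 = s$, $sr = rs = 0$, $r \in R(A)$, and $s = \sum_{i=1}^t s_i$ where $s_i \in M_{n_i}(k)$ is an idempotent of rank $r_i$. By Proposition \ref{radhom}, $R(A) \subseteq R(B)$, and a direct check analogous to Proposition \ref{commhom} for augmented homotopes identifies $B/R(A)$ with the augmented $s$-homotope $\widehat{S}_s$ of the semisimple algebra $S = A/R(A) = \bigoplus_i M_{n_i}(k)$. Since simple $B$-modules coincide with simple $B/R(A)$-modules, I may henceforth assume $A = S$ is semisimple and $\Delta = s$.

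Next I would exhibit the claimed irreducible representations of $\widehat{S}_s$. The augmentation $\epsilon: \widehat{S}_s \to k$ provides the trivial representation. For each $i \in I(\Delta)$, let $V_i = k^{n_i}$ be the defining $M_{n_i}(k)$-module, let $W_i = s_i V_i$, of dimension $r_i$, and define
\[
\pi_i: \widehat{S}_s \to {\rm End}_k(W_i), \qquad \pi_i(a + \lambda \cdot 1)w = s_i \varrho_i(a) w + \lambda w.
\]
A direct calculation using $\varrho_i(s) = s_i$ shows that $\pi_i$ is an algebra homomorphism. Since every $w \in W_i$ satisfies $w = s_i w$, one has $s_i \varrho_i(a) w = s_i \varrho_i(a) s_i w$, and as $a$ ranges over $S$ the operators $s_i \varrho_i(a) s_i$ exhaust the corner $s_i M_{n_i}(k) s_i = {\rm End}_k(W_i) \cong M_{r_i}(k)$; hence $\pi_i$ is irreducible. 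The representations $\epsilon, \pi_i$ are pairwise non-isomorphic (by dimension, and because distinct $\pi_i$ factor through distinct simple quotients of $S$).

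Finally I would show these are all the irreducibles by checking that the product map $\Pi := (\epsilon, (\pi_i)_{i \in I(\Delta)}): \widehat{S}_s \to k \times \prod_{i \in I(\Delta)} M_{r_i}(k)$ is surjective with nilpotent kernel. Surjectivity follows by independent choice of preimages in each factor. The kernel consists of elements $x = \sum_i a_i \in S$ with $s_i a_i s_i = 0$ for $i \in I(\Delta)$ (no constraint for $i \notin I(\Delta)$, where $s_i = 0$). Using $x *_{\Delta} y = x s y$ together with the block-orthogonality of the $M_{n_i}(k)$, for any three such elements a routine block-wise computation yields $x *_{\Delta} y *_{\Delta} z = \sum_i a_i (s_i b_i s_i) c_i = 0$, so $\ker \Pi$ is a nil, hence nilpotent, two-sided ideal contained in $R(\widehat{S}_s)$. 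Since the target is semisimple and each $\pi_i$ is nonzero, $\Pi$ descends to an isomorphism $B/R(B) \cong k \times \prod_{i \in I(\Delta)} M_{r_i}(k)$, yielding the claimed bijection and dimensions. The main technical step is the nilpotency of $\ker \Pi$, which is essentially forced by the suitable form.
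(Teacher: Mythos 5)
Your proposal is correct and follows essentially the same route as the paper: reduce modulo $R(A)$ using $R(A)\subseteq R(B)$ and the compatibility of homotopes with quotients by two-sided ideals, then settle the semisimple case directly. The only difference is that you carry out explicitly (via the representations $\pi_i$ on $W_i=s_iV_i$ and the computation $B/R(B)\cong k\times\prod_{i\in I(\Delta)}M_{r_i}(k)$) what the paper dismisses as ``direct checking'' after reducing to a single matrix block.
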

\begin{proof}
Consider decompositions of $A = S(A) \oplus R(A)$ and $B = S(B) \oplus R(B)$, where $S(A)$ and $S(B)$ are semisimple parts of $A$ and $B$ respectively.
Also, we have decomposition of $\Delta = s + r, s \in S(A), r \in R(A)$.
As we know, $R(A) \subseteq R(B)$. Using proposition \ref{commhom}, we know that operation of taking quotient by two-sided ideal and operation of taking homotope are commuting. Thus, we get that $S(B) = \widehat{S(A)}_{s}$, where $s$ is semisimple part of $\Delta$. Further, we can consider only matrix algebra. The rest is a direct checking.

%Of course, we can assume that $\Delta$ is suitable.
%Thus, $\Delta = s + r$ and $s = \sum_{i \in I(\Delta)} s_i$, where $\pi_i(\Delta) = s_i$ for $i \in Ess(\Delta)$. Also, $s^2_i = s_i, i \in I(\Delta)$, $s_i s_j %= 0$ for $i \ne j$ and $s_i r = r s_i = 0$. Direct calculations show us that $s_i *_{\Delta} s_i = s_i$ and $s_i *_{\Delta} s_j = 0$ for $i \ne j$. Thus, we get %the decomposition of unit $1 \in B$ into sum of orthogonal idempotents:
%$$
%1 = \sum_{i \in I(\Delta)}s_i + (1 - \sum_{i \in I(\Delta)}s_i).
%$$

%It is easy that $s_i S s_i = M_{r_i}(k) \subset S$.
%One can deduce tat $s_i *_{\Delta} B *_{\Delta} s_i \cong M_{r_i}(k)$. Standard arguments from theory of artinian algebras give us the result.
\end{proof}

%Recall the construction of quiver $QA = (Q_0,Q_1)$ of finite-dimensional algebra $A$.
%Let $R(A)$ be a Jacobson's radical and $S = A/R(A) = \oplus^t_{i=1}S_i$, where $S_i \cong M_{n_i}(k)$.
%Vertices of $Q$, i.e. elements of $Q_0$, correspond to irreducible $A$-modules.
%For $i, j \in Q_0$,
%let the number $t_{ij}$ of arrows from $i$ to $j$ in $QA$ be the rank of the finitely generated
%$S_j-S_i$-bimodule $S_i \cdot R(A)/R^2(A) \cdot A_i$.
%Recall that for two artinian algebras $A$ and $B$, the rank of a finitely generated $A-B$-bimodule $M$ is defined as the least cardinal number of the sets of %generators.
.

\subsection{Remark on homotopes of associative commutative algebras.}

%In this section all algebras are presumed unital associative commutative.
%In this case there is a deeply developed theory.

%\subsection{Fibre product of algebras and pushout of schemes.}

In this subsection all algebras are presumed unital commutative associative algebras.

There is a deeply developed theory in the case of commutative algebras.
Since any ideal of commutative algebras is two-sided then we can consider augmented homotope as a fibre product of algebras. Of course, in the case of noncommutative algebras homotope is not fibre product.

Firstly, recall the notion of fibre product of algebras. Consider three algebras: $A$, $B$, $R$ and morphisms $f:A \to R$ and $g: B \to R$.
{\it Fibre product} $A \times_R B$ of $A$ and $B$ over $R$ is a subalgebra of $A \times B$ consisting of pairs $(a,b)$ such that $f(a) = g(b)$.

Assume that $A$ is an integral domain. Fix element $\Delta \in A$. In this case natural morphism $\psi = \psi_1 = \psi_2:\widehat{A}_{\Delta} \to A$ is an immersion and $\psi(\widehat{A}_{\Delta}) = k \cdot 1 + A\Delta$. It is easy that $\widehat{A}_{\Delta}$ has a structure of a fibre product of some algebras. %Actually, consider algebra $A$.
%Fix element $\Delta \in A$. Put $B = k$, $R = A/A\Delta$, where $A\Delta$ is an ideal of $A$ generated by $\Delta$
$\widehat{A}_{\Delta} = A \times_R k$, where $R = A/A\Delta$ and $k \subseteq R$ is a scalar subalgebra.

Recall some results on fibre product of algebras:

\begin{Theorem}(Ogoma \cite{Og})
Consider fibre product $B' = A' \times_A B$ with morphisms: $f: A' \to A$ and $g: B \to A$. Let $I$ and $J$ be a kernels of $f$ and $g$ respectively.
Denote by $C$ the subalgebra $f(A') \cap g(B) \subseteq A$. Assume that $A'$, $B$ and $A$ are noetherian algebra.
Algebra $B' = A' \times_A B$ is noetherian if and only if
\begin{itemize}
    \item{$C$ is a noetherian algebra}
    \item{$I/I^2$ and $J/J^2$ are finite-generated $C$-modules.}
\end{itemize}
\end{Theorem}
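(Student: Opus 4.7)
The plan is to study $B' = A' \times_A B$ through the two natural ideals $K_1 = I \times \{0\}$ and $K_2 = \{0\} \times J$, which are the kernels of the projections $B' \to B$ and $B' \to A'$ respectively. Componentwise multiplication in $A' \times B$ yields $K_1 \cap K_2 = 0$ and $K_1 K_2 = K_2 K_1 = 0$, so inductively $K_1^n = I^n \times \{0\}$ and $K_2^n = \{0\} \times J^n$, while the map $(a', b) \mapsto f(a') = g(b)$ identifies $B'/(K_1 + K_2)$ with $C$. Because $K_2$ annihilates $K_1$, the $B'$-action on each $K_1^n / K_1^{n+1} \cong I^n / I^{n+1}$ factors through $B'/(K_1 + K_2) = C$, and symmetrically for $K_2$; in particular $I/I^2$ and $J/J^2$ are canonically $C$-modules.

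Given this setup, the necessity direction is immediate: if $B'$ is noetherian, then $C \cong B'/(K_1 + K_2)$ is noetherian as a quotient, the ideals $K_1$ and $K_2$ are finitely generated as $B'$-modules, and passing to the quotients $K_i / K_i^2$ together with the remark that the $B'$-action on these factors through $C$ yields finite generation of $I/I^2$ and $J/J^2$ over $C$.

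For sufficiency, assume $C$ is noetherian and $I/I^2$, $J/J^2$ are finitely generated $C$-modules. The short exact sequence $0 \to K_1 \oplus K_2 \to B' \to C \to 0$ reduces the problem to showing $K_1$ and $K_2$ are noetherian as $B'$-modules. The finite generation of $I/I^2$ over $C$ makes the Rees-type algebra $\bigoplus_{n \ge 0} I^n / I^{n+1}$ a finitely generated graded $C$-algebra, so every homogeneous component $I^n / I^{n+1}$ is a noetherian $C$-module and hence a noetherian $B'$-module via the factoring through $C$. The main obstacle is now to lift noetherianness from the associated graded $\mathrm{gr}_I(K_1)$ back to $K_1$ itself, since $I$ need not be nilpotent and the $B'$-action on $K_1$ factors through the subring $f^{-1}(g(B)) \subseteq A'$, which in general is not noetherian. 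The remedy is to exploit the noetherianness of $A'$ directly: any $B'$-submodule $N \subseteq K_1$ is in particular an $f^{-1}(g(B))$-submodule of $I$, and the Artin--Rees lemma applied in the noetherian ring $A'$ ensures that the $I$-adic filtration of $N$ is eventually stable and induces a submodule of the noetherian graded object $\mathrm{gr}_I(K_1)$; this forces any ascending chain in $K_1$ to terminate. The same argument applied to $K_2$ closes the proof.
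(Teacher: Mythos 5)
Your necessity argument and your reduction of the sufficiency direction are correct: indeed $K_1^n=I^n\times\{0\}$, $B'/(K_1+K_2)\cong C$, the $B'$-action on $K_1/K_1^2\cong I/I^2$ factors through $C$, and $B'$ is noetherian iff $C$, $K_1$, $K_2$ are noetherian $B'$-modules; also each $I^n/I^{n+1}$ is a noetherian $C$-module (a small correction: the graded algebra you want is $C\oplus\bigoplus_{n\ge 1}I^n/I^{n+1}$, since the degree-zero piece $A'/I$ of $\bigoplus_{n\ge 0}I^n/I^{n+1}$ need not be a finitely generated $C$-algebra or $C$-module). For the record, the paper does not prove this theorem at all --- it cites Ogoma and only sketches the necessity direction --- so the real test is your last step, and that is where there is a genuine gap.

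The final step is exactly the hard core of Ogoma's theorem, and the sentence invoking Artin--Rees does not close it. First, Artin--Rees in the noetherian ring $A'$ applies to $A'$-submodules of finitely generated $A'$-modules; a $B'$-submodule $N\subseteq K_1$ is only a module over $D=f^{-1}(g(B))$, which is in general neither noetherian nor able to be enlarged to an $A'$-action, so Artin--Rees says nothing about the filtration $N\cap I^n$ (at best something about $A'N\cap I^n$, which does not control $N$). Second, even granting that the chain of associated graded modules $\mathrm{gr}_I(N_k)\subseteq\bigoplus_{n\ge 1}I^n/I^{n+1}$ stabilizes over the noetherian ring $G=C\oplus\bigoplus_{n\ge 1}I^n/I^{n+1}$, equality $\mathrm{gr}_I(N_k)=\mathrm{gr}_I(N_{k+1})$ only yields $N_{k+1}=N_k+\bigl(N_{k+1}\cap I^n\bigr)$ for every $n$, i.e.\ $N_{k+1}$ lies in the $I$-adic closure of $N_k$; to deduce $N_{k+1}=N_k$ you would need these $D$-submodules to be $I$-adically closed, which is not automatic because $I$ need not lie in the Jacobson radical, the modules are not $A'$-submodules (so Krull-type intersection arguments do not apply to them), and no completeness hypothesis is available. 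The same defect already appears when you try to prove merely that $I$ is finitely generated over $D$: the graded argument gives $I=P+I^n$ for all $n$ with $P$ finitely generated, and nothing you have written rules out $I\supsetneq P$ with $I\subseteq\bigcap_n(P+I^n)$. Bridging this "from $\mathrm{gr}$ back to the module" step under only the stated hypotheses is precisely what Ogoma's proof accomplishes, and it requires a substantially different argument from the one sketched.
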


\begin{Proposition}(\cite{FC},\cite{Sch})
Assume that morphism $f: A' \to A$ is surjective. In this case ${\rm Spec} B' = {\rm Spec}A' \sqcup_{{\rm Spec}A} {\rm Spec}B$, i.e. a pushout of schemes ${\rm Spec} B$ and ${\rm Spec} A'$ over ${\rm Spec} A$.
Also, ${\rm Spec}A' \sqcup_{{\rm Spec}A} {\rm Spec}B \setminus {\rm Spec}B \cong {\rm Spec}A' \setminus {\rm Spec}A$.
\end{Proposition}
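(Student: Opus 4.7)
The plan is to establish the result in three parts: verify that ${\rm Spec}\, B \hookrightarrow {\rm Spec}\, B'$ is a closed immersion with explicit defining ideal, identify the open complement by a localization calculation, and finally check the universal property. For the first part, surjectivity of $f$ makes the projection $p_2: B' \to B$ surjective (given $b \in B$, pick any $a' \in A'$ with $f(a')=g(b)$ to form $(a',b) \in B'$). Its kernel is the ideal $K := \ker(f)\times\{0\} \subset B'$, so ${\rm Spec}(p_2):{\rm Spec}\, B \hookrightarrow {\rm Spec}\, B'$ is a closed immersion cut out by $K$, with image $V(K)$.

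For the second part, take any $a' \in \ker f$ and set $\tilde a := (a',0) \in B'$. A direct localization computation shows that $p_1$ induces an isomorphism $B'_{\tilde a} \cong A'_{a'}$: surjectivity holds because $(a'x,0) \in B'$ for every $x \in A'$ (since $f(a'x)=0$), hence $x/a'^n$ lifts to $(a'x,0)/\tilde a^{n+1}$; injectivity uses that $a'^m x = 0$ in $A'$ implies $\tilde a^{m+1}(x,y)=0$ in $B'$. As $a'$ ranges over $\ker f$, the opens $D(\tilde a)$ cover ${\rm Spec}\, B' \setminus V(K)$ while the opens $D(a')$ cover ${\rm Spec}\, A' \setminus V(\ker f) = {\rm Spec}\, A' \setminus {\rm Spec}\, A$; these local isomorphisms glue to give the second claim of the proposition.

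For the pushout property, given a test scheme $T$ and morphisms $u:{\rm Spec}\, A' \to T$, $v:{\rm Spec}\, B \to T$ that agree upon pullback to ${\rm Spec}\, A$, I would construct a unique $w:{\rm Spec}\, B' \to T$. For affine $T = {\rm Spec}\, R$ the result is immediate from the pullback definition $B' = A' \times_A B$ in rings. For general $T$, cover $T$ by affine opens; pulling these back via $u$ and $v$ and using the description of ${\rm Spec}\, B'$ from the first two parts yields local data on ${\rm Spec}\, B'$ to which the affine case applies, and uniqueness there guarantees the local morphisms patch to a global $w$.

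The main obstacle is the gluing in the last step. The closed set $V(K)$ and its complement do not overlap, so the patching is not a standard open-cover gluing; rather one must combine the closed and open parts of the decomposition provided by Step~2. Concretely, one needs affine charts of ${\rm Spec}\, B'$ that straddle the closed/open decomposition, which is the real content of Ferrand's pinching theorem \cite{FC}. Surjectivity of $f$ is used precisely to guarantee that $V(K)$ equals the image of ${\rm Spec}\, B$ inside ${\rm Spec}\, B'$; without it, the closed part would be smaller and the naive fibre product would not realize the pushout.
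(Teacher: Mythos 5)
First, note that the paper itself gives no proof of this proposition: it is quoted from \cite{FC} and \cite{Sch}, so your attempt can only be judged on its own merits. Your first two steps are correct and complete. Surjectivity of $f$ does make $p_2: B' \to B$ surjective with kernel $K = \ker(f)\times\{0\}$, so ${\rm Spec}\,B \hookrightarrow {\rm Spec}\,B'$ is a closed immersion with image $V(K)$; and the localization computation $B'_{\tilde a} \cong A'_{a'}$ for $a' \in \ker f$, $\tilde a = (a',0)$, checks out (both the lifting $x/a'^{n} \mapsto (a'x,0)/\tilde a^{n+1}$, using $f(a'x)=0$, and the annihilation $\tilde a^{m+1}(x,y)=(a'^{m+1}x,0)=0$). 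Since every one of these local isomorphisms is a restriction of the single morphism ${\rm Spec}(p_1): {\rm Spec}\,A' \to {\rm Spec}\,B'$, the compatibility on overlaps is automatic, and the second assertion of the proposition, ${\rm Spec}\,B'\setminus {\rm Spec}\,B \cong {\rm Spec}\,A' \setminus {\rm Spec}\,A$, is fully proved.

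The pushout property --- the main assertion --- is not. The affine test case is fine: for $T = {\rm Spec}\,R$ the universal property of the fibre product of rings gives a unique $w$. But your globalization plan begs the question. Pulling back an affine cover $\{T_i\}$ of $T$ along $u$ and $v$ produces, inside ${\rm Spec}\,B'$, subsets that are unions of a piece of the closed set $V(K)$ with a piece of its open complement; you have no a priori reason that these subsets are open in ${\rm Spec}\,B'$ (equivalently, that the topology on ${\rm Spec}\,B'$ is the quotient topology from ${\rm Spec}\,A' \sqcup {\rm Spec}\,B$), nor that they admit affine covers to which the affine case can be applied. You name this obstacle yourself and defer it to ``Ferrand's pinching theorem,'' but that theorem, in its affine incarnation, \emph{is} the statement being proved, so the deferral is circular. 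What is actually missing is the key lemma of the Ferrand--Schwede argument: for a compatible pair $a'\in A'$, $b\in B$ with $f(a')=g(b)$ and $\tilde a = (a',b)\in B'$, one has $B'_{\tilde a} \cong A'_{a'} \times_{A_{f(a')}} B_{b}$, and the basic opens $D(\tilde a)$ of this form suffice to show that a subset of ${\rm Spec}\,B'$ is open iff its preimages in ${\rm Spec}\,A'$ and ${\rm Spec}\,B$ are open. With that lemma your patching argument goes through; without it the first claim of the proposition remains unproved.
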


We have the following commutative diagram:
\begin{equation}
\xymatrix{
0 \ar[r] & I \ar[r]\ar[d]^{=} & B' \ar[r]\ar[d] & B \ar[r]\ar[d]^{g} & 0\\
0 \ar[r] & I \ar[r] & A' \ar[r]^{f} & A \ar[r] & 0
}
\end{equation}

Geometrically, ${\rm Spec}B'$ is a "cutting" of subscheme ${\rm Spec}A \subset {\rm Spec}A'$ and paste ${\rm Spec} B$ instead of ${\rm Spec}A$. It is easy that subscheme ${\rm Spec}B \subset {\rm Spec}B'$ is given by ideal $I$. Thus, conormal sheaf to ${\rm Spec}A \subset {\rm Spec}A'$ and conormal sheaf to ${\rm Spec}B \subset {\rm Spec}B'$ are the same and isomorphic to $I/I^2$. It is easy that if $I/I^2$ is not a finite-generated $B$ - module then ${\rm Spec}B'$ is not noetherian. Ogoma proved that converse statement is true.
Formulate the following evident statements:
\begin{Corollary}
\label{surj}
Consider noetherian algebra $A$ and element $\Delta$. In this case natural morphism: ${\rm Spec}A \to {\rm Spec}\widehat{A}_{\Delta}$ is surjective birational.
\end{Corollary}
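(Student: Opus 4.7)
The plan is to read the statement directly off the Proposition on spectra of fibre products stated immediately before the Corollary. First I would recall the identification established earlier in the subsection: under the standing assumption that $A$ is a (noetherian) integral domain, we have $\widehat{A}_{\Delta} \cong A \times_R k$ with $R = A/A\Delta$, and the natural morphism ${\rm Spec}\,A \to {\rm Spec}\,\widehat{A}_{\Delta}$ corresponds to the inclusion $\widehat{A}_{\Delta} = k\cdot 1 + A\Delta \hookrightarrow A$. The projection $f\colon A \twoheadrightarrow R$ is tautologically surjective, which is exactly the hypothesis of the Proposition.

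Applying that Proposition with $A' = A$ and $B = k$ yields
\begin{equation*}
{\rm Spec}\,\widehat{A}_{\Delta} \;\cong\; {\rm Spec}\,A \sqcup_{{\rm Spec}\,R} {\rm Spec}\,k,
\end{equation*}
so that ${\rm Spec}\,\widehat{A}_{\Delta}$ is obtained from ${\rm Spec}\,A$ by collapsing the closed subscheme $V(\Delta) = {\rm Spec}\,R$ to the single point of ${\rm Spec}\,k$, and the morphism in question is precisely this quotient map. Surjectivity is then immediate: every point of the pushout is either the image of a point of ${\rm Spec}\,A \setminus V(\Delta)$ or the collapsed point, which in turn is the image of any prime of $A$ containing $\Delta$.

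For birationality I would invoke the second clause of the same Proposition, which supplies the isomorphism
\begin{equation*}
{\rm Spec}\,\widehat{A}_{\Delta} \setminus {\rm Spec}\,k \;\cong\; {\rm Spec}\,A \setminus V(\Delta),
\end{equation*}
so the natural morphism restricts to an isomorphism over the open complement of the collapsed point. Because $A$ is an integral domain and $\Delta \ne 0$, the set $V(\Delta)$ is a proper closed subset of ${\rm Spec}\,A$, hence its complement is dense, which is exactly what is meant by birational here. I do not foresee a genuine obstacle: the cited Proposition carries the full content, and the only bookkeeping item is the degenerate case $\Delta = 0$, where $\widehat{A}_{\Delta} = k \oplus A$ and both assertions are trivial.
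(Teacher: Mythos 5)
Your argument is correct and is exactly the route the paper intends: the Corollary is stated as an ``evident'' consequence of the Proposition quoted from \cite{FC}, \cite{Sch}, applied to the identification $\widehat{A}_{\Delta} = A \times_{A/A\Delta} k$ established just above, with surjectivity read off from the pushout description and birationality from the isomorphism ${\rm Spec}\,\widehat{A}_{\Delta} \setminus {\rm Spec}\,k \cong {\rm Spec}\,A \setminus V(\Delta)$, whose complement is dense since $A$ is a domain and $\Delta \neq 0$. The one slip is your closing remark on the degenerate case: for $\Delta = 0$ the homotope $\widehat{A}_{0}$ is $k$ plus a square-zero copy of $A$ (not the product algebra $k \times A$), so ${\rm Spec}\,\widehat{A}_{0}$ is a single point and the morphism, though surjective, is not birational once ${\rm dim}\,{\rm Spec}\,A \ge 1$; the statement therefore tacitly assumes $\Delta \neq 0$ (together with the subsection's standing hypothesis that $A$ is an integral domain, which is also needed for the fibre-product identification you invoke), rather than that case being trivially true.
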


\begin{Corollary}
Assume that $A$ is a noetherian algebra. Fix element $\Delta \in A$. Consider homotope $\widehat{A}_{\Delta}$ and denote by $x$ the point of ${\rm Spec}\widehat{A}_{\Delta}$ corresponding to augmentation $\epsilon$. Then we have the following properties of ${\rm Spec}\widehat{A}_{\Delta}$:
\begin{itemize}
\item{Algebra $\widehat{A}_{\Delta}$ is noetherian if and only if scheme ${\rm Spec}(A/A\Delta)$ has dimension zero. In particular, if $\widehat{A}_{\Delta}$ is noetherian then ${\rm Spec}A$ is an affine scheme of dimension at most $1$.}
\item{Consider noetherian algebra $A$ and element $\Delta \in A$.
If ${\rm dim}{\rm Spec}A > 1$ then tangent space of ${\rm Spec}\widehat{A}_{\Delta}$ at point $x$ is infinite. If ${\rm dim}{\rm Spec}A = 1$ and ${\rm dim}(A/A\Delta) > 1$ then ${\rm Spec}\widehat{A}_{\Delta}$ is singular curve and $x$ is a singular point of ${\rm Spec}\widehat{A}_{\Delta}$}
\end{itemize}
\end{Corollary}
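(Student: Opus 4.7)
The plan is to use the fibre product description $\widehat{A}_\Delta \cong A \times_R k$ with $R := A/A\Delta$ (established in the discussion preceding the statement) and feed it into Ogoma's theorem. In Ogoma's notation, the two kernels are $I = A\Delta \subset A$ and $J = \ker(k \to R)$; when $\Delta$ is not a unit, the map $k \hookrightarrow R$ is injective so $J = 0$, and the intersection subalgebra $C$ equals $k \subset R$. (The case $\Delta$ invertible is trivial since then $\widehat{A}_\Delta \cong A \oplus k$.) Ogoma's criterion therefore collapses to a single condition: $A\Delta/(A\Delta)^2$ must be finitely generated over $k$, i.e.\ finite-dimensional.

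Next I would identify $A\Delta/(A\Delta)^2 \cong A/A\Delta$ via $a\Delta \mapsto a$, which is an isomorphism when $\Delta$ is a nonzerodivisor (automatic in the integral-domain framework of the preceding paragraph). Hence $\widehat{A}_\Delta$ is noetherian exactly when $\dim_k A/A\Delta < \infty$, equivalently, when the noetherian scheme ${\rm Spec}(A/A\Delta)$ has Krull dimension $0$. Krull's principal ideal theorem then gives ${\rm dim}\,{\rm Spec}\,A \le {\rm dim}\,{\rm Spec}(A/A\Delta) + 1 = 1$, proving the "in particular" clause of the first bullet.

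For the second bullet I would invoke the identification, already worked out in the paragraph above the statement, that the conormal sheaf at $x \in {\rm Spec}\,\widehat{A}_\Delta$ is isomorphic to $I/I^2 = A\Delta/(A\Delta)^2$. By the computation of the second paragraph this is $A/A\Delta$ as a $k$-module, so the Zariski tangent space dimension at $x$ equals $\dim_k A/A\Delta$. If ${\rm dim}\,{\rm Spec}\,A > 1$, then Krull's theorem forces $A/A\Delta$ to have positive Krull dimension and hence to be infinite-dimensional over $k$, so the tangent space at $x$ is infinite. If instead ${\rm dim}\,{\rm Spec}\,A = 1$, then Corollary \ref{surj} (surjective birational ${\rm Spec}\,A \to {\rm Spec}\,\widehat{A}_\Delta$) forces ${\rm Spec}\,\widehat{A}_\Delta$ to be a one-dimensional scheme, i.e.\ a curve, while $\dim_k(A/A\Delta) > 1$ forces the tangent space at $x$ to have dimension strictly greater than $1$, so $x$ is a singular point.

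The step I expect to be the main obstacle is the bookkeeping around the nonzerodivisor hypothesis on $\Delta$ together with the identification $A\Delta/(A\Delta)^2 \cong A/A\Delta$: when $\Delta$ is a zerodivisor one only has a surjection, and one must argue separately — for instance via associated primes of $A$, or by factoring out $\mathrm{Ann}(\Delta)$ — that the conormal computation still delivers the same dimension. The remaining ingredients (Ogoma's theorem, the conormal-sheaf description recalled above the statement, Krull's Hauptidealsatz, and Corollary \ref{surj}) slot in essentially mechanically once this identification is in hand.
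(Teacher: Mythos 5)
Your argument is essentially the paper's own: the paper gives no separate proof (it introduces these as ``evident statements''), relying exactly on the ingredients you use — the fibre-product description $\widehat{A}_{\Delta}=A\times_{A/A\Delta}k$, Ogoma's criterion with $C=k$, $J=0$, $I=A\Delta$, the conormal identification $I/I^2\cong A/A\Delta$ for the tangent space at $x$, and Krull's Hauptidealsatz for the dimension bound. The one caveat, which you share with the paper's own formulation rather than introduce yourself, is that the equivalence ``$\dim_k A/A\Delta<\infty \Leftrightarrow \dim\mathrm{Spec}(A/A\Delta)=0$'' implicitly assumes $A$ is a finitely generated $k$-algebra (and $\Delta$ a nonzerodivisor, as in the integral-domain setting of the preceding paragraph); under those standing assumptions your proof is complete.
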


\begin{Example}(Ogoma \cite{Og}, Schwede \cite{Sch}, Beil \cite{Beil} and many others)
\label{exampl}
Consider algebra $A = k[x,y]$ and element $\Delta = x$. Using theorem of Ogoma, we get that $B = \widehat{A}_{\Delta}$ is non-noetherian and isomorphic to $k[x,xy,xy^2,...] \subset k[x,y]$.
%Note that ${\rm Spec}\widehat{A}_{\Delta}$ is a constructible subset of $k^2$.
%There is the following description of ${\rm Spec}B$.
%Denote by $B_{n}$ the subalgebra $k[x,xy,...,xy^n] \subset k[x,y]$. It is easy that $B_n \subset B$. It is easy that $B_1 \cong k[x,y]$, $B_i \subset B_{i+1}$ and %$B and hence, there is a natural morphism: $i:{\rm Spec}B \to k^2 = {\rm Spec}B_1$ which is an open embedding. We have the following commutative diagram:
%\begin{equation}
%\xymatrix{
%{\rm Spec}A \ar[rr]^f\ar[rd] && {\rm Spec}B_1\\
%& {\rm Spec}B \ar[ru]^i
%}
%\end{equation}
%morphism $f:k^2 = {\rm Spec}A \to k^2 = {\rm Spec}B_1$ is given by formula: $(x,y) \mapsto (x, xy)$.
%One can show that $f(k^2) = {\rm Spec}B$ is a constructible subset of $k^2$ and has the following decomposition: ${\rm Spec}B = k^2 \setminus \{x = 0\} \cup %(0,0)$. Tangent space of ${\rm Spec}B$ at $(0,0)$ is infinite-dimensional.
\end{Example}

This example is a counter-example to famous Richardson's lemma in the following sense.
Recall that Richardson's lemma (see Kraft's book, \cite{Rich}) is the following statement: consider two affine varieties $X$ and $Y$. Assume that $Y$ is a normal variety. If $f: X \to Y$ is surjective birational morphism then $f$ is an isomorphism. By definition, $Y = {\rm Spec}B$ is a normal affine variety iff $B$ is noetherian integral closed ring. It is natural to ask the following question: could we dispense the condition of noethering of $B$?
The answer is no.
Actually, let $A$, $\Delta$ and $B$ be as in example \ref{exampl}. Using corollary \ref{surj}, natural morphism: ${\rm Spec}k[x,y] \to {\rm Spec}B$ is surjective birational.
We have to prove that algebra $B$ is integral closed. It is easy that $B = \varinjlim B_n$, where $B_n = k[x,xy,...,xy^n]$.
It is easy that ${\rm Spec}B_n$ is an affine cone over rational normal curve and hence, $B_n$ is integral closed. It can be shown in usual way that direct limit of integral closed rings is integral closed.
%Note the following properties: ${\rm Spec}B$ is an image of morphism: $(x,y) \mapsto (x,xy)$.
%One can show that $f(k^2) = {\rm Spec}B$ is a constructible subset of $k^2$ and has the following decomposition: ${\rm Spec}B = k^2 \setminus \{x = 0\} \cup (0,0)$.

In the end of subsection we recall the simple example on affine curves.
\begin{Example}
Consider algebra $A = k[x]$ and element $\Delta = x^2 + ax + b, a,b \in k$. One can deduce that ${\rm Spec}\widehat{A}_{\Delta}$ is a rational curve with node if roots of the polynomial $\Delta$ are simple and rational curve with cusp if $\Delta$ has double root.
\end{Example}
%Of course, these examples can be considered as affine blowdown

%Consider affine curve $C = {\rm Spec}A$, element $\Delta$ and its immersion $i: C \subset {\mathbb A}^s$. Let $\overline{C}$ be a projectivization of $C$ given by $i$, i.e. $i: \overline{C} \subset {\mathbb P}^s$ be a natural continuation of $i$. Denote by $D$ the divisor of $\overline{C}$ given by $\Delta$.
%Assume that $D_{\infty}$ the intersection of infinity devisor of ${\mathbb P}^s$ with $\overline{C}$, i.e. $\overline{C} = C \cup D_{\infty}$ and $D \cap D_{\infty} = \emptyset$. Of course, immersion $i$ curve $\overline{C}$ in ${\mathbb P}^s$ is given by complete linear system ${\rm H}^0(\overline{C}, {\cal O}(D_{\infty}))$. Consider linear system $L \subset {\rm H}^0(\overline{C}, {\cal O}(D + D_{\infty}))$ consisting of divisors $D'$ such that $D' - D \ge 0$.
%Adding to $L$ the divisor $D_{\infty}$, we get linear system $L'$. Consider morphism $f$ given by linear system $L'$. Using standard arguments, we get that $C' = f(\overline{C})$ is a compactification of ${\rm Spec}\widehat{A}_{\Delta}$ and ${\rm Spec}\widehat{A}_{\Delta} = C' \setminus f(D_{\infty})$.

 %One can check that $B = \varinjlim B_n$.  Thus, $B$ is non-noetherian integral closed ring.

\subsection{Categorial approach to homotopes of commutative algebras.}

What can we say in the case when $\Delta$ is not well-tempered? We give the partial answer on this question only in the case of commutative algebras.

Recall the notion fibre product of categories.
Let ${\cal A}_1$, ${\cal A}_2$ and ${\cal A}_3$ are categories. Let ${\cal F}: {\cal A}_1 \to {\cal A}_3$ and ${\cal G}: {\cal A}_2 \to {\cal A}_3$ are the functors. Define the category ${\cal A}_1 \times_{{\cal A}_3} {\cal A}_2$ as follows. Object of ${\cal A}_1 \times_{{\cal A}_3} {\cal A}_2$ is a triple $(M,N,\alpha)$, where $M \in Ob({\cal A}_1), N \in Ob({\cal A}_2)$ and $\alpha: {\cal F}(M) \cong {\cal G}(N)$ is an isomorphism of objects. Morphism $(M,N,\alpha) \to (M',N',\alpha')$ is a pair $(a,b)$, $a: M \to M'$ and $b: N \to N'$ are morphisms in categories ${\cal A}, {\cal B}$ respectively such that the diagram:
\begin{equation}
\xymatrix{
{\cal F}(M) \ar[rr]^{\alpha}\ar[d]^{{\cal F}(a)} && {\cal G}(N)\ar[d]^{{\cal G}(b)}\\
{\cal F}(M') \ar[rr]^{\alpha'} && {\cal G}(N')
}
\end{equation}
is commutative.
%Denote by ${\rm A - mod}$ the category of finitely generated left $A$-modules.
Consider three algebras $A'$, $A$, $B'$ and morphisms $f: A' \to A$ and $g: B' \to A$, where $f$ is surjective.
Let $B = A \times_{A'} B'$.
In this case there is a functor: $\Psi: {\rm B-Mod} \to {\cal C} = {\rm A - Mod} \times_{\rm A' - Mod} {\rm B' - Mod}$ defined by formula:
\begin{equation}
\Psi: L \mapsto (B' \otimes_{B} L, A \otimes_{B} L, can),
\end{equation}
where $can$ is a natural isomorphism: $A' \otimes_{A} A \otimes_{B} L \cong A' \otimes_{B'} B' \otimes_{B} L$. Conversely, let $N$ and $M'$ be $B'$ - module and $A'$ - module respectively. Let $\phi$ be an isomorphism $A' \otimes_{B'} N \cong A' \otimes_{A} M'$ of $A$ - modules. Denote by $M$ the $A$-module $A' \otimes_{A} M'$.

\begin{Theorem}\cite{St}
\label{thf}
Functor $\Psi': (N,M',\phi) \mapsto N \times_{\phi, M}M'$ is right adjoint to $\Psi$. Moreover, $\Psi \circ \Psi'$ is identity on ${\rm A - mod} \times_{\rm A' - mod} {\rm B' - mod}$.
Morphism $L \to \Psi' \circ \Psi(L) = B' \otimes_{B}L \times_{A' \otimes_{B}L} A \otimes_{B}L$ is surjective.
\end{Theorem}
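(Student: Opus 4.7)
The plan is to produce the adjunction $(\Psi, \Psi')$ by a direct Hom-calculation, then exploit surjectivity of $f: A' \to A$ in the form of the short exact sequence $0 \to I \to A' \to A \to 0$ (with $I = \ker f$) to verify both the identity statement and the surjectivity statement. A key structural remark that I would state up front is that since $f$ is surjective, the kernel ideal $I$ sits also inside $B = A' \times_A B'$ as the kernel of $B \twoheadrightarrow B'$, so tensoring a $B$-module $L$ with $A'$ over $B$ amounts to killing $I \cdot L$ through the $B'$-structure only, while $A \otimes_B L \cong A' \otimes_B L / I \cdot (A' \otimes_B L) \cong B' \otimes_B L / I \cdot (B' \otimes_B L)$; this is the engine that drives everything.

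First I would construct the adjunction. Given $L \in \mathrm{B-Mod}$ and $(M', N, \phi) \in \mathcal{C}$, a morphism $\Psi(L) \to (M', N, \phi)$ is a pair $(a, b)$ with $a: A' \otimes_B L \to M'$ a map of $A'$-modules, $b: B' \otimes_B L \to N$ a map of $B'$-modules, and a square over $A$ commuting with $\phi$. By the standard extension/restriction adjunctions, $a$ and $b$ correspond uniquely to $B$-linear maps $\tilde a: L \to M'$ and $\tilde b: L \to N$. The commutativity with $\phi$ translates, after tensoring down to $A$, precisely to the condition that the pair $(\tilde a, \tilde b)$ factors through the fibre product $M' \times_\phi N = \Psi'(M', N, \phi)$. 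This yields the natural bijection $\mathrm{Hom}_{\mathcal{C}}(\Psi(L), (M', N, \phi)) \cong \mathrm{Hom}_B(L, \Psi'(M', N, \phi))$, so $\Psi'$ is right adjoint to $\Psi$.

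Next, for $\Psi \circ \Psi' = \mathrm{Id}$, I take $(M', N, \phi) \in \mathcal{C}$, set $L = M' \times_\phi N$, and must produce natural isomorphisms $A' \otimes_B L \cong M'$ and $B' \otimes_B L \cong N$ compatible with the canonical identification. The obvious map $A' \otimes_B L \to M'$, $(a' \otimes (m', n)) \mapsto a' m'$, is visibly surjective. To see injectivity, I use that because $f$ is surjective the projection $L \to M'$ has kernel $\{(0, n) : g(n) = 0\} = \ker g$ (embedded in $L$ via the second coordinate); a chase with the exact sequence $0 \to I \to A' \to A \to 0$ shows that $I \cdot L$ exactly kills this kernel after extending scalars to $A'$, so $A' \otimes_B L \xrightarrow{\sim} M'$. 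The argument for $B' \otimes_B L \cong N$ is symmetric (using that $g$ need not be surjective but that the kernel of the projection $L \to N$ is contained in $A'$-summand killed by $I \cdot L$ after passing to $B'$). The $\phi$ on the $A$-level is then manifestly the canonical one.

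Finally, the surjectivity of the unit $\eta_L: L \to \Psi'\Psi(L) = (B' \otimes_B L) \times_{A \otimes_B L} (A' \otimes_B L)$ is the step I expect to be the main obstacle, because it is where one must use the surjectivity of $f$ most decisively and non-formally. The approach I would take: given a compatible pair $(\bar n, \bar m)$ with $\bar n \in B' \otimes_B L$ and $\bar m \in A' \otimes_B L$ mapping to the same class in $A \otimes_B L$, write $\bar m = \sum a'_i \otimes \ell_i$ and, using surjectivity of $f$, choose lifts in $B = A' \times_A B'$ of the $A$-images of the $a'_i$ together with compatible $B'$-components that match $\bar n$ modulo $I \cdot (B' \otimes_B L)$; this produces an element $\ell \in L$ whose image is $(\bar n, \bar m)$. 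In other words, the kernel of $A' \to A$ being the same as the kernel of $B \to B'$ forces the ``defect'' measured by $\eta_L$ to vanish. Putting the three steps together gives the theorem.
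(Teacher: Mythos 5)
The paper does not prove this theorem at all --- it is imported verbatim from the Stacks Project entry cited as \cite{St} --- so there is no internal argument to compare yours with, and your plan has to stand on its own. Step 1 is fine: restriction/extension of scalars along $B\to A'$ and $B\to B'$, together with the compatibility over $A$, is exactly a $B$-linear map into the fibre product, so $\Psi'$ is right adjoint to $\Psi$. The gaps are in steps 2 and 3, which are precisely the non-formal content of the lemma. In step 2, for $L=M'\times_M N$ (with $M=A\otimes_{A'}M'\cong A\otimes_{B'}N$), the map $A'\otimes_B L\to M'$ is not ``visibly'' surjective: the projection $L\to M'$ is in general \emph{not} surjective, because a given $m'\in M'$ needs some $n\in N$ with $\phi(\bar m')=\bar n$, and $N\to A\otimes_{B'}N$ need not be onto (surjectivity of $g$ is not assumed). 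Surjectivity does hold, but only after using the $A'$-action: write $\phi(\bar m')=\sum a_i\bar n_i$, lift the $a_i$ along $f$, pick $m'_i$ with $(m'_i,n_i)\in L$, and correct by $IM'$, which lies in ${\rm pr}_{M'}(L)$. More seriously, your injectivity argument is not an argument: the kernel of $L\to M'$ is $\{(0,n):\bar n=0\}=\ker(N\to A\otimes_{B'}N)$, which is not ``$\ker g$'' (it contains $(\ker g)N$ but is in general larger), and the phrase ``$I\cdot L$ exactly kills this kernel'' does not parse --- $I\cdot L$ lies in $IM'\times 0$ while that kernel lies in $0\times N$, so they meet trivially. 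Injectivity of $A'\otimes_B L\to M'$ is the delicate heart of the cited lemma and your sketch leaves it untouched. Nor are the two sides ``symmetric'': $B\to B'$ is surjective with kernel $I=\ker f$, which makes $B'\otimes_B L\cong L/IL\cong N$ comparatively easy, whereas $B\to A'$ need not be surjective, which is the whole difficulty on the $A'$-side.

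Step 3 as written would fail. Lifting the $A$-images of the coefficients $a'_i$ to $B=A'\times_A B'$ requires $f(a'_i)\in{\rm im}(g)$, i.e.\ surjectivity of $g$, which is not assumed; and ``modulo $I\cdot(B'\otimes_B L)$'' is vacuous because $I$ maps to zero in $B'$. The correct lifting goes the other way and is short: since $\ker(B\to B')=I$, one has $B'\otimes_B L=L/IL$, so lift $\bar n$ to some $\ell\in L$; then $\bar m-1\otimes\ell$ lies in $\ker(A'\otimes_B L\to A\otimes_B L)=I\cdot(A'\otimes_B L)$, and every element of $I\cdot(A'\otimes_B L)$ is the image of an element of $IL$ (because $i\cdot(a'\otimes\ell')=1\otimes(ia')\ell'$ with $ia'\in I\subset B$), whose image in $B'\otimes_B L$ vanishes; hence $(\bar m,\bar n)$ is hit by an element of $L$. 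So your guiding idea --- exploit $\ker f=\ker(B\to B')=I$ --- is the right one and does deliver the unit surjectivity once reoriented as above, but the isomorphism $A'\otimes_B(M'\times_M N)\to M'$ in step 2 still needs a genuine proof; that is exactly what the citation to \cite{St} is carrying.
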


Consider algebra $A$, ideal $I \subseteq A$ and $B = A \times_{A/I} B'$.
It is clear that we have the following exact sequence of $B$ - modules:
\begin{equation}
\label{seq}
\xymatrix{ 0 \ar[r] & B \ar[r] & A \oplus B' \ar[r] & A/I \ar[r] & 0
}
\end{equation}
Tensoring (\ref{seq}) by $B$-module $V$, we get that $\Psi' \circ \Psi(V) = {\rm Ker} (A \otimes_B V \oplus B' \otimes_B V \to A/I \otimes_B V)$.
Thus, natural morphism $V \to \Psi' \circ \Psi (V)$ is isomorphism iff ${\rm Tor}^{B}_1(A\oplus B',V) \to {\rm Tor}^{B}_1(A/I,V)$ is isomorphism.

If $B' = k$ then $A/I$ is a trivial $B$-module. Denote by $\overline{A/I}$ the complement of $k$ in $A/I$.
In this case we have the following exact sequence of $B$-modules:
\begin{equation}
\xymatrix{
0 \ar[r] & B \ar[r] & A \ar[r] & \overline{A/I} \ar[r] & 0.
}
\end{equation}
Also, if $I = (\Delta)$ is a principal ideal of integral domain $A$ generated by $\Delta$ (i.e. $B = \widehat{A}_{\Delta}$) then we have the following exact sequence:
\begin{equation}
\xymatrix{
0 \ar[r] & A \ar[r]^{j} & B \ar[r] & k \ar[r]^{\epsilon} & 0,
}
\end{equation}
where $j$ is a morphism given by rule: $a \mapsto a \Delta$. We have isomorphism of $B$-modules: ${\rm Tor}^B_{i+1}(k,V) \cong {\rm Tor}^B_i(A,V)$ for any $B$-module $V$.  Thus, we get that $W \to \Psi' \circ \Psi (V)$ is an isomorphism iff ${\rm Tor}^B_2(k,V) \to {\rm Tor}^B_1(\overline{A/I},V)$ is an isomorphism.
%In particular, if $V$ is a quotient of $B$ by principal ideal then ${\rm Tor}^B_2(W,V) = {\rm Tor}^B_2(V,W) = 0$ for any $B$ - module $W$.
%And hence, if ${\rm Tor}^B_1(\overline{A/I},V) \ne 0$ then $V \to \Psi' \circ \Psi(V)$ is not isomorphism.
Assume that $V = B/(u)$ where $u \in B$ is a non-zero element, then ${\rm Tor}^B_2(M,V) = 0$ and ${\rm Tor}^B_1(M,V) = M/(u)M$ for any $B$-module $M$.
Thus, if $M$ is a trivial $B$ - module and $u \in I$ then ${\rm Tor}^B_1(\overline{A/I},B/(u)) = \overline{A/I} \ne 0$.
In the case of homotope we have the following statement:
\begin{Proposition}
Consider algebra $A$, element $\Delta \in A$ and $B = \widehat{A}_{\Delta}$. Then $\Psi' \circ \Psi(V) \cong V$ iff ${\rm Tor}^B_1(\overline{A/(\Delta)},V) \cong {\rm Tor}^B_2(k,V)$. If $V = B/(u)$ where $u \in I$ then kernel of natural morphism: $V \to \Psi' \circ \Psi(V)$ is $\overline{A/(\Delta)}$.
\end{Proposition}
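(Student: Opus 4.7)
My plan is to read the first assertion off the preceding analysis and to compute both Tor groups directly for $V = B/(u)$. Tensoring the fiber-product sequence $0 \to B \to A \oplus k \to A/(\Delta) \to 0$ with $V$ over $B$, the long exact Tor sequence collapses to
\begin{equation*}
0 \to {\rm Tor}^B_1(A \oplus k, V) \xrightarrow{f} {\rm Tor}^B_1(A/(\Delta), V) \xrightarrow{\partial} V \to (A \oplus k) \otimes_B V \to A/(\Delta) \otimes_B V \to 0,
\end{equation*}
since ${\rm Tor}^B_i(B, V) = 0$ for $i \geq 1$. Hence $\ker\bigl(V \to (A \oplus k) \otimes_B V\bigr) = {\rm coker}(f)$, and because $\Psi'\circ\Psi(V)$ is the kernel of $(A \oplus k) \otimes_B V \to A/(\Delta) \otimes_B V$, this cokernel coincides with $\ker(V \to \Psi'\circ\Psi(V))$. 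By Theorem \ref{thf} the unit $V \to \Psi'\circ\Psi(V)$ is already surjective, so being an isomorphism is equivalent to $f$ being so. I then split $A/(\Delta) = k \cdot 1 \oplus \overline{A/(\Delta)}$ (legitimate since $A/(\Delta)$ is trivial as a $B$-module) and write $f$ as a $2 \times 2$ block; the $k$-to-$k$ block is $-{\rm id}$ coming from the formula $(a, \lambda) \mapsto \bar a - \lambda$ for the right-hand map of the fiber-product sequence, so ${\rm coker}(f)$ reduces to the cokernel of the cross block ${\rm Tor}^B_1(A, V) \to {\rm Tor}^B_1(\overline{A/(\Delta)}, V)$. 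The identification ${\rm Tor}^B_{i+1}(k, V) \cong {\rm Tor}^B_i(A, V)$ from $0 \to A \xrightarrow{\cdot \Delta} B \to k \to 0$ finally rewrites this as the map ${\rm Tor}^B_2(k, V) \to {\rm Tor}^B_1(\overline{A/(\Delta)}, V)$ appearing in the statement, giving the first claim.

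For the second assertion, take $V = B/(u)$ with $u \in (\Delta) \subseteq B^+$. Because $A$ is a domain and $\psi : B \hookrightarrow A$ is injective, $B$ is a domain as well, so the nonzero $u$ is a nonzerodivisor and $0 \to B \xrightarrow{\cdot u} B \to B/(u) \to 0$ is a projective resolution of $V$. This immediately yields ${\rm Tor}^B_i(M, V) = 0$ for $i \geq 2$ and ${\rm Tor}^B_1(M, V) = \{m \in M : u m = 0\}$ for every $B$-module $M$. Applied to the trivial modules $k$ and $\overline{A/(\Delta)}$, on which $u \in B^+$ acts by zero, this gives ${\rm Tor}^B_2(k, V) = 0$ and ${\rm Tor}^B_1(\overline{A/(\Delta)}, V) = \overline{A/(\Delta)}$. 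The comparison map from the first part is therefore $0 \to \overline{A/(\Delta)}$, whose cokernel $\overline{A/(\Delta)}$ is the required $\ker(V \to \Psi'\circ\Psi(V))$.

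The only step that is not pure bookkeeping is the block-matrix verification in the first paragraph: one must check, from the explicit form of the surjection in the fiber-product sequence, that the induced map on ${\rm Tor}_1$'s has an identity (up to sign) on the $k$-to-$k$ corner, so that the cokernel is entirely controlled by the cross block. Once this is in place, the rest is a direct substitution into the two short exact sequences, combined with the unit surjectivity of Theorem \ref{thf}.
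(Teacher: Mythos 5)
Your proof is correct and follows essentially the same route as the paper's own derivation (which is carried out in the discussion preceding the proposition): tensoring the fibre-product sequence $0 \to B \to A \oplus k \to A/(\Delta) \to 0$ with $V$, invoking the surjectivity of the unit from Theorem \ref{thf}, shifting via ${\rm Tor}^B_{i+1}(k,V) \cong {\rm Tor}^B_i(A,V)$, and computing with the resolution $0 \to B \xrightarrow{\cdot u} B \to B/(u) \to 0$ for the second claim. Your write-up is in fact slightly more careful than the paper at two points — the explicit block-matrix reduction isolating the cross map, and identifying ${\rm Tor}^B_1(M,B/(u))$ as the $u$-torsion of $M$ (the paper writes $M/(u)M$, which coincides only because the relevant modules are trivial) — but these are refinements of the same argument, not a different one.
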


Consider category ${\cal C}$ ($ = {\rm A-Mod} \times_{{\rm A/I-Mod}} {\rm k - Mod}$) and natural functor ${\cal C} \to {\rm A - Mod}$.
One can show that ${\rm Hom}_{\cal C}(V_1,V_2) \cong {\rm Hom}_A(V_1,V_2)$ for any $V_1,V_2 \in {\cal C}$ and identify ${\cal C}$ with a full subcategory of ${\rm A - Mod}$ consisting of $A$-modules $W$ such that $A/I \otimes_A W$ is a free $A/I$-module. Roughly speaking, category ${\cal C}$ is a "cutting" of category of ${\rm A/I-Mod}$ from ${\rm A - Mod}$ and "pasting" the category ${\rm k - Mod}$ instead of ${\rm A/I - Mod}$. Of course, if $\Delta$ is well-tempered element (invertible) of $A$ then category ${\cal C}$ is a "gluing" of ${\rm A - Mod}$ and ${\rm k - Mod}$ and ${\rm B - Mod}$ is equivalent to ${\cal C}$.
%One can show that natural functor ${\cal C} \to {\rm A - Mod}$ identify is
%Also, ${\rm Hom}_{\cal C}(V_1,V_2) \cong {\rm Hom}_A(V_1,V_2)$ for any $V_1,V_2 \in {\cal C}$. Thus, ${\cal C}$ is a full subcategory of ${\rm A - Mod}$.
Using theorem \ref{thf}, we get that ${\cal C}$ is a full subcategory of ${\rm B - Mod}$.
Thus, we get the following
\begin{Proposition}
Consider commutative algebra $A$, element $\Delta \in A$ and homotope $B = \widehat{A}_{\Delta}$. In this case category ${\cal C}$ is a full subcategory of ${\rm A - Mod}$ and ${\rm B - Mod}$.
\end{Proposition}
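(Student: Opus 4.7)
The Proposition packages two embedding claims: $\mathcal{C}$ sits as a full subcategory of ${\rm A-Mod}$, and $\mathcal{C}$ sits as a full subcategory of ${\rm B-Mod}$. Both are essentially consequences of the machinery assembled just above the statement, so the plan is to assemble those pieces cleanly rather than do anything genuinely new.

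For the embedding $\mathcal{C} \hookrightarrow {\rm B-Mod}$, I would apply Theorem \ref{thf} in the present setting: take the role of $A'$ to be our commutative algebra $A$, the role of $B'$ to be $k$, and the role of the target $A$ in that theorem to be the quotient $A/I$ with $I = (\Delta)$. Since $A$ is an integral domain, the fibre product $A \times_{A/I} k$ coincides with $B = \widehat{A}_{\Delta}$, as recorded in the preceding subsection. Theorem \ref{thf} then supplies an adjoint pair $\Psi: {\rm B-Mod} \rightleftarrows \mathcal{C} : \Psi'$ with $\Psi \circ \Psi' \cong \mathrm{Id}_{\mathcal{C}}$. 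This says the counit of the adjunction $(\Psi,\Psi')$ is a natural isomorphism, and by the standard categorical fact that a right adjoint whose counit is invertible must be fully faithful, $\Psi'$ embeds $\mathcal{C}$ fully faithfully into ${\rm B-Mod}$.

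For the embedding $\mathcal{C} \hookrightarrow {\rm A-Mod}$, I would use the first-projection functor $p : (M,N,\alpha) \mapsto M$. The essential image is clearly contained in the class of $A$-modules $W$ with $W/IW$ free over $A/I$, since $M/IM \cong A/I \otimes_k N$ via $\alpha$, and conversely any such $W$ admits a compatible triple once a basis of $W/IW$ is chosen. To upgrade this identification to a \emph{full} subcategory embedding, I would verify that for $V_i = (M_i, N_i, \alpha_i)$ and any $A$-linear map $a : M_1 \to M_2$, a unique $b : N_1 \to N_2$ makes $(a,b)$ a morphism in $\mathcal{C}$. The strategy is to transport $a \otimes 1_{A/I}$ through $\alpha_1$ and $\alpha_2$ to obtain an $A/I$-linear map $\bar{a} : A/I \otimes_k N_1 \to A/I \otimes_k N_2$, then recover $b$ as the unique $k$-linear map of the distinguished subspaces $1 \otimes N_i \subset A/I \otimes_k N_i$ whose $A/I$-linear extension is $\bar{a}$.

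The main obstacle I anticipate is exactly this last extraction step. A priori an arbitrary $A/I$-linear map between free $A/I$-modules need not preserve the distinguished $k$-subspaces $1 \otimes N_i$, so the existence and uniqueness of $b$ is not formal from the bare fibre-product definition of $\mathcal{C}$. Closing the gap should involve using the commutativity of $A$ together with the explicit structure of the isomorphisms $\alpha_i$ coming from the chosen presentation of $W/IW$ as a free $A/I$-module; the role of the scalar embedding $k \hookrightarrow A/I$ (which distinguishes the subspace $1 \otimes N_i$) is what must ultimately force $\bar{a}$ to preserve these subspaces, and making this precise is the heart of the proof.
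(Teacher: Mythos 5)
Your treatment of the embedding into ${\rm B-Mod}$ is precisely the paper's route: apply Theorem \ref{thf} with $A'=A$, $B'=k$ and base $A/I$, $I=(\Delta)$, and invoke the standard fact that a right adjoint with invertible counit is fully faithful; that half is correct. The genuine gap is exactly the step you flagged yourself, the fullness of the projection $(M,N,\alpha)\mapsto M$ from ${\cal C}$ to ${\rm A-Mod}$, and the bad news is that this step does not merely resist a formal argument --- it fails whenever $A/A\Delta\neq k$. Concretely, take $A=k[x]$, $\Delta=x^2$, $I=(x^2)$, and $V=(A,k,{\rm can})\in{\cal C}$ (here $A/I\otimes_A A=A/I$ is free of rank one). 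A morphism $V\to V$ in ${\cal C}$ is a pair $(f,\lambda)$ with $f\in A$, $\lambda\in k$ and $f\equiv\lambda \bmod (x^2)$, so ${\rm End}_{\cal C}(V)=k+(x^2)=B$, while ${\rm End}_A(A)=A$: multiplication by $x$ is an $A$-linear map admitting no compatible $b$, because the induced $A/I$-linear endomorphism (multiplication by the class of $x$) is not of the form $1\otimes b$ for any scalar $b$. The same happens in the nodal case $\Delta$ of degree two with simple roots $p,q$, where ${\rm End}_{\cal C}(V)=\{f\in A \mid f(p)=f(q)\in k\}=B$. So the distinguished subspaces $1\otimes N_i$ are genuinely not preserved, no matter how the trivializations $\alpha_i$ are chosen; there is no hidden argument closing this hole.

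You should also be aware that the paper itself does not supply the missing argument: its justification of the ${\rm A-Mod}$ half is the bare assertion that ${\rm Hom}_{\cal C}(V_1,V_2)\cong{\rm Hom}_A(V_1,V_2)$, which the example above contradicts. What is actually true, and what your adjunction argument delivers, is the identification of ${\rm Hom}_{\cal C}$ with ${\rm Hom}_B$ of the associated fibre-product modules, i.e.\ the full embedding ${\cal C}\hookrightarrow{\rm B-Mod}$; the comparison with ${\rm A-Mod}$ is an identification on objects (essential image the $A$-modules $W$ with $A/I\otimes_A W$ free over $A/I$) together with a faithful, but in general not full, functor. To obtain fullness into ${\rm A-Mod}$ one needs an extra hypothesis such as $A/A\Delta\cong k$ (e.g.\ $\Delta$ generating a maximal ideal with residue field $k$), or one must weaken the statement accordingly; your plan cannot be completed as written, and neither can the paper's.
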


{\bf Remark.}
Consider algebra $A = k[x]$, element $\Delta$ of degree 2 without double roots and $B = \widehat{A}_{\Delta}$. As we know, ${\rm Spec}B$ is a rational curve with node. In this case construction of ${\cal C}$ is a local version of beautiful construction of Burban and Drozd (see \cite{BD1}, \cite{BD2}). Using fibre product of categories, they classify indecomposable object in the category of coherent sheaves on rational curve with node.

.

\end{document}